\newtheorem{theorem}{Theorem}[section] 
\newtheorem{lemma}[theorem]{Lemma}
\newtheorem{proposition}[theorem]{Proposition}
\newtheorem{corollary}[theorem]{Corollary}
\theoremstyle{definition}
\newtheorem{definition}[theorem]{Definition}
\newtheorem{remark}[theorem]{Remark}
\newtheorem{example}[theorem]{Example}
\newtheorem{question}[theorem]{Question}
\newcommand{\popo}{\mathbb{P}^1 \times \mathbb{P}^1}
\newcommand{\N}{\mathbb{N}}
\newcommand{\con}{\,{\big\|} \,}
\newcommand{\card}[1]{\left|#1\right|}
\begin{document}

\title{A numerical property of Hilbert functions and lex segment ideals
}
\author{Giuseppe Favacchio}
\address{Dipartimento di Matematica e Informatica\\ Viale A. Doria, 6 - 95100 - Catania, Italy}
\email{favacchio@dmi.unict.it} \urladdr{www.dmi.unict.it/~gfavacchio}

\thanks{Version:\today }
\keywords{Hilbert function\and lex segment ideal\and  bigraded algebra } 
\subjclass[2000]{ 13F20, 13A15, 13D40}

\maketitle

\begin{abstract} We introduce the \textit{fractal expansions}, sequences of integers associated to a number. We show that these sequences characterize the $O$-sequences and encode some informations on lex segment ideals. 
Moreover, we introduce a numerical functions called \textit{fractal functions}, and we show that fractal functions can be used to classify the Hilbert functions of bigraded algebras.
	
\end{abstract}



\section{Introduction}
In Commutative Algebra (and other fields of pure mathematics) it often happens that easy numerical conditions describe some deeper algebraic results. A significant example are the $O$-sequences.  

Let $S:=k[x_1,\ldots, x_n]$ be the standard graded polynomial ring and let $I\subseteq S$ be a homogeneous ideal. The quotient ring  $S/I$ is called a \textit{standard graded k-algebra}. The Hilbert function of  $S/I$ is defined as $H_{S/I}: \mathbb N\to \mathbb N $ such that
\[H_{S/I}(t):=\dim_k\left(S/I\right)_t= \dim_k S_t - \dim_k I_t. \]

A famous theorem, due to Macaulay (cf. \cite{M}) and pointed out by Stanley (cf. \cite{St}), characterizes the numerical functions that are Hilbert functions of a standard graded $k$-algebra, i.e. the functions $H$ such that $H=H_{S/I}$ for some homogeneous ideal $I\subseteq S$.
To introduce this fundamental result we need some preparatory material.

Let $h,i>0$ be integers, we can uniquely write $h$ as
\begin{equation}\label{def:bin exp}
	h ={m_i\choose i} + {m_{i-1}\choose i-1}+\cdots+ {m_j\choose j} 
\end{equation}

where $m_i > m_{i-1} > \cdots > m_j \ge j \ge 1.$
This expression is called the $i$-\textit{binomial expansion} of the integer $h$.   

If $h>0$  has $i$-binomial expansion as in  \ref{def:bin exp}, then we set
\[h^{\langle i\rangle } = {m_i+1\choose i+1} + {m_i\choose i}+\cdots+ {m_j+1\choose j+1}.
\]
We use the convention that $0^{\langle i\rangle }=0.$

For example, since $7= {4 \choose 3} + {3\choose 2} $, the $3$-binomial expansion of $7$ is 
	and $7^{\langle 3\rangle }= {5 \choose 4} + {4\choose 3} = 9.$

\begin{definition}
	A sequence of non-negative integers $(h_0, h_1, h_2 \cdots )$ is called an $O$-sequence if
	\begin{itemize}
		\item[i)] $h_0 = 1$;
		\item[ii)] $h_{i+1} \le h_i^{\langle i\rangle }$
		for all $i>0.$
	\end{itemize}
	An $O$-sequence is said to have maximal growth from degree $i$
	to degree $i + 1$ if $h_{i+1} = h_i^{\langle i\rangle }$
\end{definition} 
We are now ready to enunciate the Macaulay's Theorem. It characterizes the Hilbert function of standard graded $k$-algebras bounding the growth from any degree to the next. The proof of this theorem and more details about $O$-sequences are also discussed in \cite{BH}  Chapter 4.
We represent $H_{S/I}$ as a sequence of integers $(h_0, h_1, h_2, \cdots)$ where $h_t:=H_{S/I}(t)$.
\begin{theorem}[Macaulay, \cite{M}]\label{Macaulay}
	Let $H:=(h_0, h_1, h_2, \cdots)$ be a sequence of integers, then the following are equivalent:
	\begin{itemize}
		\item[1)] $H$ is the Hilbert function of a standard graded $k$-algebra;
		\item[2)] $H$ is an $O$-sequence.
	\end{itemize}
\end{theorem}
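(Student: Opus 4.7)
My plan is to deduce both implications from a single combinatorial identity for lex-segments, the \emph{lex growth identity}: if $V \subseteq S_t$ is the lex-segment subspace spanned by the $\binom{n+t-1}{t} - h$ lex-largest monomials of degree $t$, then $S_1 \cdot V \subseteq S_{t+1}$ is itself a lex-segment, namely the one spanned by the $\binom{n+t}{t+1} - h^{\langle t \rangle}$ largest monomials of $S_{t+1}$. This is precisely the combinatorial fact that justifies the definition of the Macaulay operator $h \mapsto h^{\langle t \rangle}$.

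For 2) $\Rightarrow$ 1), I would realize $H = (h_0, h_1, \ldots)$ explicitly by a lex-segment ideal. Choose $n$ large enough that $h_t \leq \binom{n+t-1}{t}$ for all $t$ (e.g.\ $n = h_1$ suffices for any non-trivial $O$-sequence), set $S = k[x_1, \ldots, x_n]$, and for each $t$ let $L_t \subseteq S_t$ be the lex-segment of codimension $h_t$. To check that $L = \bigoplus_t L_t$ is indeed an ideal I must verify $S_1 \cdot L_t \subseteq L_{t+1}$ for every $t$. The growth identity identifies $S_1 \cdot L_t$ as the lex-segment of codimension $h_t^{\langle t \rangle}$ in $S_{t+1}$, while $L_{t+1}$ is the lex-segment of codimension $h_{t+1} \leq h_t^{\langle t \rangle}$; thus one lex-segment sits inside the other, giving the required inclusion. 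By construction $H_{S/L} = H$.

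For 1) $\Rightarrow$ 2), I would first pass to the initial ideal $J := \mathrm{in}(I)$ with respect to an arbitrary term order; this preserves the Hilbert function and produces a monomial ideal. The target inequality $h_{t+1} \leq h_t^{\langle t \rangle}$ is equivalent to the lower bound $\dim_k J_{t+1} \geq \binom{n+t}{t+1} - h_t^{\langle t \rangle}$, and since $J_{t+1} \supseteq S_1 \cdot J_t$ it suffices to bound $\dim_k(S_1 \cdot J_t)$ from below. The key input is the \emph{minimality of lex growth}: for any $k$-subspace $V \subseteq S_t$ of dimension $d$, if $L \subseteq S_t$ is the lex-segment of the same dimension then
\[
\dim_k(S_1 \cdot V) \;\geq\; \dim_k(S_1 \cdot L).
\]
Applying this with $V = J_t$ (dimension $\binom{n+t-1}{t} - h_t$) and computing the right-hand side via the lex growth identity yields exactly the bound I need.

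The main obstacle is the minimality inequality displayed above. My intended proof is by \emph{compression}: given a monomial subspace $V \subseteq S_t$, iteratively replace it by monomial subspaces of the same dimension that are "more lex" with respect to one variable at a time, checking at each step that $\dim_k(S_1 \cdot V)$ does not increase; after finitely many compressions one reaches the lex-segment of dimension $\dim V$. Carefully tracking how the $t$-binomial expansion of $\dim V$ transforms under each compression step is the delicate combinatorial core of the argument; an alternative route is to encode $V$ as a squarefree shadow and invoke the Kruskal--Katona theorem.
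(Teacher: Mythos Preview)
The paper does not actually prove this theorem: it is stated as Macaulay's classical result and the reader is referred to \cite{BH}, Chapter~4, for a proof. There is therefore no argument in the paper to compare your proposal against.

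That said, your outline is the standard proof one finds in the cited reference. The lex growth identity, the construction of a lex-segment ideal realizing a given $O$-sequence, passage to the initial ideal, and the minimal-growth inequality for $S_1\cdot V$ proved via compression are exactly the ingredients of the usual treatment (Bruns--Herzog, or equivalently the exposition in Herzog--Hibi \cite{HH}). One small caution: the sentence ``choose $n$ large enough that $h_t \le \binom{n+t-1}{t}$ for all $t$ (e.g.\ $n=h_1$ suffices)'' deserves a word of justification---it follows because the $O$-sequence condition forces $h_t \le \binom{h_1+t-1}{t}$ by induction, but you should say this rather than leave it implicit. Otherwise the plan is correct and complete as a sketch.
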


It is therefore interesting to find an extension of the above theorem to the multi-graded case.
Multi-graded Hilbert functions arise in many contexts and  properties related to the Hilbert function of multi-graded algebras are currently studied. See for instance \cite{GVT} and \cite{PS} for several examples.
The generalization of Macaulay's theorem to multi-graded rings is an open problem. A first answer was given by the author in \cite{F} where the Hilbert functions of a bigraded algebra in $k[x_1,x_2,y_1,y_2]$ are classified.  

In this paper, see Section \ref{s.Bilex Algebras} Theorem \ref{main}, we generalize the Macaulay's theorem to any bigraded algebra. First, in Section \ref{s.fractal}, we introduce $\Phi(n)$ a list of finite sequences called fractal expansion of $n$. Then we define a \textit{coherent truncation} of these vectors and we show that these objects are strictly related to the $O$-sequences.  Indeed, in Section \ref{s.Fractal and Homol} we show that they also characterize the Hilbert function of standard graded $k$-algebras. Furthermore, we show that these sequences can be used to compute the Betti numbers of a lex ideal.  

The computer program CoCoA \cite{C} was indispensable for all the computations. 

\section{The expansion of a Fractal sequence} \label{s.fractal}
In this section we introduce a sequence of tuples, called \textit{coherent fractal growth}, and we study its properties. In the main result of this section, Theorem \ref{t.bound on growth}, we prove that these sequences have the same behavior of the $O$-sequences.

Roughly speaking a numerical sequence $\sigma$ is called fractal if once we delete the first occurrence of each number it remains identical to the original. Such property thus implies we can repeat this process indefinitely, and $\sigma$ contains infinitely many copy of itself. It has something like a \textit{fractal} behavior. See \cite{K} for a formal definition and further properties.

For instance, one can show that the sequence
	$$\sigma=( 1,1,2,1,2,3,1,2,3,4,1,2,3,4,5,\ldots)$$ is fractal. Indeed, after removing the first occurrence of each number we get a sequence that is the same as the starting one
	$$(\cancel{1},1,\cancel{2},1,2,\cancel{3},1,2,3,\cancel{4},1,2,3,4,\cancel{5},\ldots)= \sigma.$$


We introduce some notation. Given a positive integer $a\in \N$ we denote by $[a]:=(1,2,\ldots, a)\in \N^a$ the tuple of length $a$ consisting af all the positive integers less then or equal to $a$ written in increasing order. 

Given a (finite or infinite) sequence $\sigma$ of positive integers we construct a new sequence, named the expansion of $\sigma,$ denoted by $[\sigma]$. If $\sigma:=(s_1,s_2,s_3,\ldots)$ we set $[\sigma]:=[s_1]\con [s_2],\con [s_3]\con \cdots,$ where the symbol  "$\con$" denotes the associative operation of concatenation of two vectors. E.g. $(3,5,4)\con(2,3)=(3,5,4,2,3).$   

This construction can be recursively applied. We denote by  $[\sigma]^d:=[[\sigma]^{d-1}]$ where we set $[\sigma]^0:=\sigma$. 
For a positive integer $a$, we also denote by $[a]^d:=[[a]^{d-1}]$, where $[a]^0:=(a).$

For instance we have $$[3]=(1,2,3),\ \ [3]^2=(1,1,2,1,2,3), \ \ [3]^3=(1,1,1,2,1,1,2,1,2,3).$$

\begin{lemma}\label{l.frac^d}
	Let $\sigma:=(s_1,s_2,s_3,\ldots)$ be a sequence of positive integers. Then $[\sigma]^d=[s_1]^d\con [s_2]^d \con [s_3]^d \con \cdots.$
\end{lemma}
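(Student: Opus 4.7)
The plan is to prove the statement by induction on $d$, with the main lemma being that the expansion operation $[\cdot]$ distributes over concatenation of sequences.

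First I would establish the distributivity: for any two sequences $\tau = (t_1, t_2, \ldots)$ and $\rho = (r_1, r_2, \ldots)$, one has $[\tau \con \rho] = [\tau] \con [\rho]$. This is essentially immediate from the definition of $[\cdot]$, since both sides equal $[t_1] \con [t_2] \con \cdots \con [r_1] \con [r_2] \con \cdots$; the associativity of concatenation lets us regroup the one-index tuples $[t_i]$ and $[r_j]$ in either order. By an easy finite induction, this extends to the concatenation of any finite number of sequences, and by taking limits (componentwise, since concatenation is built up prefix by prefix), to countably many summands: $[\tau_1 \con \tau_2 \con \cdots] = [\tau_1] \con [\tau_2] \con \cdots$.

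With distributivity in hand I would induct on $d$. The base case $d=0$ holds by the convention $[\sigma]^0 = \sigma$ and $[s_i]^0 = (s_i)$, so that $\sigma = (s_1, s_2, \ldots) = (s_1) \con (s_2) \con \cdots$. For the inductive step, suppose $[\sigma]^{d-1} = [s_1]^{d-1} \con [s_2]^{d-1} \con \cdots$. Applying $[\cdot]$ to both sides and then invoking the distributivity lemma yields
$$[\sigma]^d = \bigl[[\sigma]^{d-1}\bigr] = \bigl[\,[s_1]^{d-1} \con [s_2]^{d-1} \con \cdots \,\bigr] = \bigl[[s_1]^{d-1}\bigr] \con \bigl[[s_2]^{d-1}\bigr] \con \cdots = [s_1]^d \con [s_2]^d \con \cdots,$$
which is exactly the required identity.

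There is no real obstacle here; the only subtlety is that $\sigma$ may be infinite, so one has to be slightly careful that the distributivity lemma applies to countable concatenations. But since $[\cdot]$ acts locally on each entry and concatenation of infinitely many nonempty tuples is well defined as a sequence, the extension from the finite to the infinite case is automatic. The whole argument is thus essentially a formal unfolding of definitions combined with one short induction.
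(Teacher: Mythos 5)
Your proposal is correct and takes essentially the same route as the paper: induction on $d$ using $[\sigma]^d=\bigl[[\sigma]^{d-1}\bigr]$ together with the fact that $[\cdot]$ distributes over concatenation. The only difference is that you state and justify this distributivity (including for countable concatenations) explicitly, whereas the paper's one-line inductive step uses it implicitly.
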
 
 \begin{proof} If $d=0$ the statement is true. Assume $d>0.$
 	By definition $[\sigma]^d=[[\sigma]^{d-1}]$ then by the inductive hypothesis we have
 	$$[\sigma]^d=[ [s_1]^{d-1}]\con [[s_2]^{d-1}] \con [[s_3]^{d-1}] \con \cdots =[s_1]^d\con [s_2]^d \con [s_3]^d \con \cdots.$$
 \end{proof}

\begin{corollary}\label{c.[a]^d=[1]^d-1...}
	Let $a\in \N$ be a positive integer. Then $$[a]^d=[1]^{d-1}\con[2]^{d-1}\con\cdots\con [a]^{d-1}.$$
\end{corollary}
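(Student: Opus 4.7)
The plan is to reduce the claim directly to Lemma \ref{l.frac^d} by a small shift in perspective on the recursive definition. The key observation is that, although $[a]^d$ is defined by iterating the expansion $d$ times starting from the singleton $(a)$, the first iteration merely returns $[a]^1 = [(a)] = (1,2,\ldots,a) = [a]$. Hence $[a]^d$ equals the result of iterating the expansion operation $d-1$ times starting from the \emph{sequence} $[a] = (1,2,\ldots,a)$; in the notation of Lemma \ref{l.frac^d}, this says $[a]^d = [\sigma]^{d-1}$ where $\sigma := (1,2,\ldots,a)$.

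Once this identification is in place, I would simply invoke Lemma \ref{l.frac^d} with this $\sigma$ and with exponent $d-1$. Writing $\sigma = (s_1,s_2,\ldots,s_a)$ with $s_i = i$, the lemma gives
\[
[\sigma]^{d-1} \;=\; [s_1]^{d-1} \con [s_2]^{d-1} \con \cdots \con [s_a]^{d-1} \;=\; [1]^{d-1} \con [2]^{d-1} \con \cdots \con [a]^{d-1},
\]
which is exactly the statement of the corollary.

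The only point that deserves a moment of care — and it is not really an obstacle, just a notational subtlety — is the clash between the two meanings of the superscript: $[a]^d$ for an integer $a$ is defined by iterating on the singleton, while $[\sigma]^d$ for a sequence $\sigma$ is defined by iterating on a sequence. Reconciling the two amounts to the identity $[a]^1 = [a]$ noted above, which a one-line induction on $d$ then promotes to $[a]^d = [\,[a]\,]^{d-1}$ for all $d \geq 1$. With this remark handled, the proof is a one-step application of the preceding lemma.
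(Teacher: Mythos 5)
Your proposal is correct and is essentially identical to the paper's own proof: both reduce the claim via the identity $[a]^d = [(1,2,\ldots,a)]^{d-1}$ and then apply Lemma \ref{l.frac^d} entrywise. Your extra remark reconciling the integer and sequence meanings of the bracket superscript is a fine clarification of a point the paper leaves implicit, but it does not change the argument.
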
 
\begin{proof}
 The statement follows by Lemma \ref{l.frac^d} since  $[a]^d=[(1,2,\ldots, a)]^{d-1}.$
\end{proof}

\begin{remark} The sequence  
	$[\N]:=[1]\con [2]\con \cdots \con  [n] \con \cdots $
	is a fractal sequence. 
\end{remark}

Given a sequence $\sigma$,  the symbols $\sigma_i$ and $\sigma(i)$ both denote the $i$-th entry of $\sigma$. 
If $\sigma$ is finite then $|\sigma|$ denotes the number of entries and $\sum\sigma$ their sum. We use the convention that these values are "$\infty$" for infinite sequences of positive integers.   
For instance, $[3]^3_4=2$, $\card{[3]^3}=10$ and $\sum[3]^3=15.$

Throughout this paper we use the convention that, for a finite sequence  $\sigma$,  the notation $\sigma_a$ or $\sigma(a)$  implies $a\le |\sigma|.$  

\begin{remark}\label{r.card and sum}Note that, for a finite sequence of positive integers $\sigma$, the definition of $[\sigma]$ easily implies the equality $\sum\sigma=\card{[\sigma]}.$
\end{remark}

Given a positive integer $n$, we define the fractal expansion of $n$ as the list
$$\Phi(n):=([n]^0,[n]^1,\ldots,[n]^d,\ldots).$$
Each element in $\Phi(n)$ is a finite sequence of positive integers. In the following lemma we compute the number of their entries.

\begin{lemma}\label{l.DecCardandSum}	
	Let $n$ be a positive integer. Then $\left|[n]^d\right|={ d+n-1 \choose d}$ and $\sum[n]^d={ d+n \choose d+1}.$	
\end{lemma}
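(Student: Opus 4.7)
The plan is to prove both equalities together by induction on $d$, using Corollary \ref{c.[a]^d=[1]^d-1...} to reduce $[n]^d$ to a concatenation of shorter fractal expansions and then summing with the hockey stick identity.

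For the base case $d=0$, we have $[n]^0=(n)$, so $\card{[n]^0}=1=\binom{n-1}{0}$ and $\sum[n]^0=n=\binom{n}{1}$, which matches. For the inductive step, assume the formulas hold for $d-1$ and every positive integer. By Corollary \ref{c.[a]^d=[1]^d-1...},
$$[n]^d=[1]^{d-1}\con[2]^{d-1}\con\cdots\con[n]^{d-1},$$
so the cardinality is additive over the concatenation, giving
$$\card{[n]^d}=\sum_{k=1}^{n}\card{[k]^{d-1}}=\sum_{k=1}^{n}\binom{d+k-2}{d-1}.$$
Reindexing with $j=k-1$, this sum becomes $\sum_{j=0}^{n-1}\binom{d-1+j}{d-1}$, which by the hockey stick identity equals $\binom{d+n-1}{d}$, as required.

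For the sum formula, the fastest route is to invoke Remark \ref{r.card and sum}, which gives $\sum[n]^d=\card{[n]^{d+1}}$; applying the cardinality formula just established (with $d$ replaced by $d+1$) yields $\sum[n]^d=\binom{d+n}{d+1}$. Alternatively, one can argue directly from the concatenation: the sum is additive, so $\sum[n]^d=\sum_{k=1}^n\sum[k]^{d-1}=\sum_{k=1}^n\binom{d-1+k}{d}$, and the same hockey stick argument closes the computation.

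There is no real obstacle here; the only thing to get right is the bookkeeping in the hockey stick identity $\sum_{j=0}^{m}\binom{r+j}{r}=\binom{r+m+1}{r+1}$ and the correspondence between the recursive description of $[n]^d$ via Corollary \ref{c.[a]^d=[1]^d-1...} and the inductive hypothesis. Using Remark \ref{r.card and sum} to derive the second identity from the first shortens the proof noticeably.
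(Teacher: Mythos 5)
Your proof is correct and follows essentially the same route as the paper: induction on $d$ using Corollary \ref{c.[a]^d=[1]^d-1...} to split $[n]^d$ into a concatenation, summing the cardinalities (where you make the hockey stick identity explicit, which the paper leaves implicit), and deriving $\sum[n]^d=\card{[n]^{d+1}}=\binom{d+n}{d+1}$ via Remark \ref{r.card and sum}. No gaps; if anything, your write-up is slightly more careful, since the paper's displayed sum contains a typo ($\binom{d+j-2}{n-1}$ should read $\binom{d+j-2}{d-1}$) that your reindexed version avoids.
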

\begin{proof} 
	By definition we have $\card{[n]^0}={ n-1 \choose 0}=1$ and $\card{[n]^1}={ n \choose 1}=n.$  
	By Corollary \ref{c.[a]^d=[1]^d-1...} we have $[n]^d=[1]^{d-1}\con [2]^{d-1}\con\cdots\con [n]^{d-1}$, therefore 
	$$\card{[n]^d}=\sum_{j=1}^{n}\card{[j]^{d-1}}= \sum_{j=1}^{n}{ d+j-2 \choose n-1}= { d+n-1 \choose d}.$$ 
	Moreover, by Remark \ref{r.card and sum} we have $\sum[n]^d= \card{[n]^{d+1}}={ d+n \choose d+1}.$
\end{proof}

Next lemma introduces a way to decompose a number as sum of binomial coefficients that is slight different to the Macaulay decomposition.  
We use the convention that ${a \choose b}=0$ whenever $a<b.$
\begin{lemma}\label{l.fractal dec}Let $d$ be a positive integer. Any $a\in \N$ can be written uniquely in the form
	\begin{equation}\label{eq.decomposition}
	a= {k_d \choose d} + {k_{d-1} \choose d-1} +\cdots +{k_2 \choose 2} + {k_1 \choose 1}
	\end{equation}
	where $k_d>k_{d-1} > \cdots > k_2\ge k_1 \ge 1 $
\end{lemma}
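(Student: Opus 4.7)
The plan is to proceed by induction on $d$. For the base case $d=1$, the statement reduces to writing $a = \binom{k_1}{1} = k_1$ with $k_1 \geq 1$, so $k_1 = a$ is forced, giving both existence and uniqueness (with the implicit assumption $a \geq 1$, consistent with $k_1 \geq 1$).

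For the inductive step $d \geq 2$, the crucial technical ingredient I would establish first is the two-sided bound
$$\binom{k_d}{d} \;<\; \sum_{i=1}^{d} \binom{k_i}{i} \;\leq\; \binom{k_d+1}{d}$$
valid for every tuple satisfying $k_d > k_{d-1} > \cdots > k_2 \geq k_1 \geq 1$. The strict lower inequality is immediate from $\binom{k_1}{1} = k_1 \geq 1$. For the upper inequality, I would again induct on $d$: the inner sum is at most $\binom{k_{d-1}+1}{d-1}$ by the inductive hypothesis, and $k_{d-1}+1 \leq k_d$ together with Pascal's identity $\binom{k_d}{d} + \binom{k_d}{d-1} = \binom{k_d+1}{d}$ closes the estimate.

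This bound implies that the intervals $I_k := [\binom{k}{d}+1,\binom{k+1}{d}]$ for $k \geq d-1$ partition the positive integers, so for any $a \geq 1$ there is a unique index $k_d$ with $a \in I_{k_d}$; this forces $k_d$. Setting $a' := a - \binom{k_d}{d}$, we have $1 \leq a' \leq \binom{k_d}{d-1}$, and by the inductive hypothesis $a'$ admits a unique expression $\sum_{i=1}^{d-1}\binom{k_i}{i}$ of the required form. To verify the chain condition $k_{d-1} < k_d$, I apply the same bound one level down: $\binom{k_{d-1}}{d-1} < a' \leq \binom{k_d}{d-1}$ forces $k_{d-1} < k_d$. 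Uniqueness of the full decomposition then follows because $k_d$ is pinned down by the interval $I_{k_d}$ and $(k_1,\ldots,k_{d-1})$ is pinned down by the inductive uniqueness.

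The main obstacle I expect is the careful treatment of the degenerate case where $k_i < i$ and thus $\binom{k_i}{i} = 0$. In particular the smallest interval is $I_{d-1} = [1,1]$, corresponding to $a=1$ and a vanishing leading term $\binom{d-1}{d} = 0$; one must check that this edge case is consistent both with the maximality argument (the upper bound still holds when the top term is zero) and with the chain constraints as they propagate down through the induction.
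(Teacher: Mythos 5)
Your proposal is correct and follows essentially the same route as the paper: induction on $d$, choosing $k_d$ as the unique index with $\binom{k_d}{d} < a \le \binom{k_d+1}{d}$, peeling off the top term, and deriving the chain condition $k_d > k_{d-1}$ from the strict lower bound (which the paper gets from $k_1 \ge 1$) together with the upper bound $\sum_{i=1}^d \binom{k_i}{i} \le \binom{k_d+1}{d}$. The only difference is one of packaging: you isolate that two-sided bound as an explicit lemma and prove the upper half by Pascal's identity, whereas the paper asserts the inequality $\binom{k_d+1}{d} \ge \binom{k_d}{d} + \cdots + \binom{k_1}{1}$ inline without proof in its uniqueness argument, so your version is marginally more self-contained.
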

\begin{proof}
	In order to prove the existence, we choose $k_d$ maximal such that ${k_d \choose d}<a.$ If $d=1$ the statement is trivial. 
	If $d=2$ then let $k_2$ be the maximal integer such that $a- {k_2 \choose 2}>0.$ Then set $k_1:=a- {k_2 \choose 2}.$ Since $a\le {k_2+1 \choose 2} $ we have $k_1=a-{k_2 \choose 2}\le {k_2+1 \choose 2}-{k_2 \choose 2}=k_2$.
    Assume $d>2$, and let $k_d$ be the maximum integer such that  $a-{k_d \choose d}>0$. By induction, $a-{k_d \choose d}$ can be written as  $a-{k_d \choose d}=\sum_{i=1}^{d-1} {k_i \choose i},$ where $k_{d-1}>\cdots >k_2\ge k_1\ge 1.$ 
	Since $k_1\ge 1$ we have   $a-{k_d \choose d}> {k_{d-1} \choose d-1}.$
	Moreover, since ${k_d+1 \choose d}\ge a,$ it follows that
	$$ {k_d \choose d-1} = {k_{d}+1 \choose d}- {k_{d} \choose d} \ge a - {k_{d} \choose d} > {k_{d-1} \choose d-1}.$$
	Hence $k_d>k_{d-1}.$
	The uniqueness follows by induction on $d$.  If $d=1$ it is trivial. Now let $d>1$ and assume for each $b\in \N$ the uniqueness of the decomposition  \begin{equation}
	b= {k'_{d-1} \choose d-1} +\cdots +{k'_2 \choose 2} + {k'_1 \choose 1}
	\end{equation}
	where $k'_{d-1} > \cdots > k'_2\ge k'_1 \ge 1.$
	Let $a\in \N$ and let \begin{equation}\label{eq.a}
	a= {k_d \choose d} + {k_{d-1} \choose d-1} +\cdots +{k_2 \choose 2} + {k_1 \choose 1}
	\end{equation} be a decomposition of $a.$ 
	Then we claim that $k_d$ is the maximal integer such that ${k_d \choose d}<a.$
	Indeed, if ${k_d+1 \choose d}<a$ we get 
	$a> {k_d+1 \choose d}\ge {k_d \choose d} + {k_{d-1} \choose d-1} +\cdots +{k_2 \choose 2} + {k_1 \choose 1}=a.$
	Consider now the number $a - {k_{d} \choose d}$. From equation \ref{eq.a} we have  $a - {k_{d} \choose d}=\sum_{i=1}^{d-1} {k_i \choose i} $ where $k_{d-1}>\cdots >k_2\ge k_1\ge 1.$ The uniqueness of this decomposition follows from inductive hypothesis. This also implies the uniqueness of equation \ref{eq.a}.
\end{proof}

\begin{remark}
The decomposition in Lemma \ref{l.fractal dec} is different from the Macaulay decomposition since it is always required that $k_1\ge 1.$	Moreover, for any $j\ge 2$ we only have that  $k_j\ge j-1$, thus some binomial coefficient could be zero. For instance, we have $1= {d-1 \choose d} + {d-2 \choose d-1} +\cdots +{1 \choose 2} + {1 \choose 1}$ where the first $d-1$ binomial coefficients in the sum are equal to zero. 
\end{remark}

\begin{definition}\label{d.fractal decomposition}
We refer to equation \ref{eq.decomposition} as the $d$-fractal decomposition of $a.$ We denote by 
$$[a]^{(d)}:=(k_d-d+2,k_{d-1}-d+3,\ldots,k_4-2, k_3-1, k_2, k_1 )\in \N^d.$$
We call these numbers the $d$-fractal coefficients of $a.$
\end{definition}

$[a]^{(d)}$ is a not increasing sequence of positive integers.	
Indeed, $[a]^{(d)}_d>0$ and by construction $k_1\le k_2.$ Moreover, for $j>1$, since $k_j>k_{j-1}$, we have $k_j-j +2\ge k_{j-1}-j+1$.

Next result explains the name "$d$-fractal decomposition". We show that $k_1$ is the $a$-th entry in  $[n]^d$ i.e. in $[\N]^{d-1}$.
We use the convention that $[0]^d$ is the empty sequence and $\card{[0]^d}=0.$  
\begin{theorem}\label{t.k_d} $[\N]^{d-1}_a=[a]^{(d)}_d.$ 
\end{theorem}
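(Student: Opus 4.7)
The plan is to prove this by induction on $d$. The base case $d=1$ is immediate: $[\mathbb{N}]^{0}=\mathbb{N}=(1,2,3,\ldots)$, so $[\mathbb{N}]^{0}_a=a$, and the $1$-fractal decomposition is just $a=\binom{a}{1}$, giving $[a]^{(1)}_1=k_1=a$.

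For the inductive step, I would exploit the block structure of $[\mathbb{N}]^{d-1}=[1]^{d-1}\con[2]^{d-1}\con[3]^{d-1}\con\cdots$. By Lemma \ref{l.DecCardandSum}, $\card{[j]^{d-1}}=\binom{d+j-2}{d-1}$, and the hockey-stick identity gives
\[
\sum_{j=1}^{m}\binom{d+j-2}{d-1}=\binom{d+m-1}{d}.
\]
So the position $a$ lies in block $m$, where $m$ is determined by $\binom{d+m-2}{d}<a\le \binom{d+m-1}{d}$. Setting $k_d:=d+m-2$, this is exactly the characterization of $k_d$ from Lemma \ref{l.fractal dec}: the largest integer with $\binom{k_d}{d}<a$. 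Thus the block is $m=k_d-d+2$ and the entry we want is the $(a-\binom{k_d}{d})$-th entry of $[k_d-d+2]^{d-1}$.

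Now by Corollary \ref{c.[a]^d=[1]^d-1...}, $[k_d-d+2]^{d-1}=[1]^{d-2}\con[2]^{d-2}\con\cdots\con[k_d-d+2]^{d-2}$, which is an initial segment of $[\mathbb{N}]^{d-2}$. Hence, setting $a':=a-\binom{k_d}{d}$, we obtain $[\mathbb{N}]^{d-1}_a=[\mathbb{N}]^{d-2}_{a'}$, and the inductive hypothesis gives $[\mathbb{N}]^{d-2}_{a'}=[a']^{(d-1)}_{d-1}$.

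The last step is to verify that the $(d-1)$-fractal decomposition of $a'$ is precisely the tail of the $d$-fractal decomposition of $a$. Writing $a=\binom{k_d}{d}+\binom{k_{d-1}}{d-1}+\cdots+\binom{k_1}{1}$, we get $a'=\binom{k_{d-1}}{d-1}+\cdots+\binom{k_1}{1}$ with $k_{d-1}>\cdots>k_2\ge k_1\ge 1$, which by the uniqueness half of Lemma \ref{l.fractal dec} is the $(d-1)$-fractal decomposition of $a'$. In particular $[a']^{(d-1)}_{d-1}=k_1=[a]^{(d)}_d$, completing the induction. I expect the main technical care to lie in checking that the block-index translation $m=k_d-d+2$ matches the characterization of $k_d$ via Lemma \ref{l.fractal dec}; everything else is a routine unwinding of definitions and the hockey-stick identity.
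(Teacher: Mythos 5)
Your proof is correct and takes essentially the same route as the paper's: the same induction on $d$, locating position $a$ inside the block $[k_d-d+2]^{d-1}$ of the concatenation using the cardinalities from Lemma \ref{l.DecCardandSum} (your hockey-stick computation is precisely the content of that lemma), stripping off $\binom{k_d}{d}$, and appealing to the uniqueness part of Lemma \ref{l.fractal dec} to identify the tail as the $(d-1)$-fractal decomposition of $a'$. The only cosmetic difference is that you work directly with $[\N]^{d-1}$, whereas the paper fixes $n$ large enough and argues with $[n]^d$, which amounts to the same initial-segment observation.
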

\begin{proof}Assume $n$ is large enough, we will show that $[n]^{d}_a=[a]^{(d)}_d.$ If $d=1$ then $[n]^1_a= a$ and $a={ a\choose 1}$ thus $[a]^{(1)}_1=a=[n]^1_a.$
	We now assume $d>1$. Let	$a= {k_d \choose d} + {k_{d-1} \choose d-1} +\cdots +{k_2 \choose 2} + {k_1 \choose 1}$ 
	be the $d$-fractal decomposition of $a$.
	Note that, since ${k_{d-1} \choose d-1} +\cdots +{k_2 \choose 2} + {k_1 \choose 1}$ is the $(d-1)$ fractal decomposition of $a- {k_d \choose d}$, by the inductive hypothesis, we have $k_1=[n]^{d-1}_{a- {k_d \choose d}}.$ 
	Therefore, we only have to show that $[n]^{d}_{a}=[n]^{d-1}_{a- {k_d \choose d}}$.
	Since $a\le {k_d+1 \choose d}$, by Lemma \ref{l.DecCardandSum}, we have $\card{[k_d-d+2]^{d}}\ge a> \card{[k_d-d+1]^{d}}$. 
	Since from Corollary \ref{c.[a]^d=[1]^d-1...} we have the equivalence $[n]^d= [1]^{d-1}\con[2]^{d-1}\con\cdots\con[k_d-d+1]^{d-1}\con[k_d-d+2]^{d-1}\con\cdots$,  it follows that the $a$-th entry in $[n]^d$ is the $\left(a- {k_d \choose d}\right)$-th in $[k_d-d+2]^{d-1},$ i.e.,
	$[n]^d_a=[k_d-d+2]^{d-1}_{a-{k_d \choose d}}=[n]^{d-1}_{a- {k_d \choose d}}.$
\end{proof}

We introduce the lexicographic order for elements in $\N^d$. 
Given $\alpha,\beta\in \N^d$ then $\alpha<_{lex}\beta$ if and only if for some $i\le d$ we have $\alpha_j=\beta_j$ for $j<i$ and  $\alpha_i<\beta_i$.  
The following lemma is crucial for our intent. We prove that the $d$-fractal coefficients have a good behavior with respect the lex order.

\begin{lemma}\label{l.lexord}
	$[a]^{(d)}<_{lex} [b]^{(d)}$ iff $a<b$.
\end{lemma}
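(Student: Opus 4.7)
My plan is to prove the forward implication $a<b \Rightarrow [a]^{(d)}<_{lex}[b]^{(d)}$ by induction on $d$, and then deduce the converse from the uniqueness established in Lemma \ref{l.fractal dec}. Write $a=\binom{k_d}{d}+\sum_{i=1}^{d-1}\binom{k_i}{i}$ and $b=\binom{l_d}{d}+\sum_{i=1}^{d-1}\binom{l_i}{i}$ for the $d$-fractal decompositions. The base case $d=1$ is immediate since $[a]^{(1)}=(k_1)=(a)$.

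For the inductive step, I would first observe that $a<b$ forces $k_d\le l_d$: otherwise $k_d\ge l_d+1$ would give $\binom{k_d}{d}\ge\binom{l_d+1}{d}\ge b>a$, contradicting the maximality property $\binom{k_d}{d}<a$ established in the existence part of Lemma \ref{l.fractal dec}. If $k_d<l_d$, then the first entries satisfy $k_d-d+2<l_d-d+2$ and the lex comparison is already decided. In the remaining case $k_d=l_d$, set $a':=a-\binom{k_d}{d}$ and $b':=b-\binom{l_d}{d}$; both are at least $1$ because $k_1,l_1\ge 1$, and $a'<b'$. Moreover, the expression $a'=\sum_{i=1}^{d-1}\binom{k_i}{i}$ with $k_{d-1}>\cdots>k_2\ge k_1\ge 1$ is by uniqueness the $(d-1)$-fractal decomposition of $a'$, and likewise for $b'$. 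A direct comparison of the shift convention in Definition \ref{d.fractal decomposition} (where $k_j$ is shifted by $2-j$ for $j\ge 2$ and $k_1$ is unshifted) shows that the last $d-1$ entries of $[a]^{(d)}$ coincide entry-for-entry with $[a']^{(d-1)}$, and similarly for $b$. The inductive hypothesis then yields $[a']^{(d-1)}<_{lex}[b']^{(d-1)}$, which together with the equal first entries gives $[a]^{(d)}<_{lex}[b]^{(d)}$.

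For the converse, the map $a\mapsto[a]^{(d)}$ is injective by Lemma \ref{l.fractal dec}, so $[a]^{(d)}<_{lex}[b]^{(d)}$ forces $a\ne b$. If $a>b$, the forward direction just proved would give $[b]^{(d)}<_{lex}[a]^{(d)}$, contradicting the antisymmetry of the lex order; hence $a<b$.

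The main technical point I expect is the bookkeeping verification that the last $d-1$ entries of $[a]^{(d)}$ really do form $[a']^{(d-1)}$, since the shifts in Definition \ref{d.fractal decomposition} depend on $d$ and on the index $j$ of $k_j$; once this identification is written out carefully at positions $2,\ldots,d$, the induction proceeds with no further difficulty.
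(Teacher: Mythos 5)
Your proof is correct, and it diverges from the paper's in one genuine way. For the direction $a<b \Rightarrow [a]^{(d)}<_{lex}[b]^{(d)}$ you argue exactly as the paper does: the maximality of the leading coefficient (if $k_d \ge l_d+1$ then $\binom{k_d}{d}\ge\binom{l_d+1}{d}\ge b>a$, contradicting $\binom{k_d}{d}<a$) forces $k_d\le l_d$, and in the equal case you induct on the remainders. You are more careful than the paper on the bookkeeping point: your verification that the last $d-1$ entries of $[a]^{(d)}$ are precisely $[a']^{(d-1)}$ is right, since the entry attached to $k_j$ is $k_j-j+2$ for $j\ge 2$ (and $k_1$ for $j=1$), a shift depending only on $j$ and not on $d$; the paper compresses this into the phrase ``the statement follows by induction.'' Where you genuinely differ is the other implication: the paper proves $[a]^{(d)}<_{lex}[b]^{(d)} \Rightarrow a<b$ directly, via the chain $b > \binom{b_d}{d}+\cdots+\binom{b_j}{j} \ge \binom{a_d}{d}+\cdots+\binom{a_j+1}{j}\ge a$ at the first index $j$ where the tuples disagree, whereas you get it for free from trichotomy: both $<$ on $\N$ and $<_{lex}$ on $\N^d$ are strict total orders, so a strictly monotone map automatically reflects the order (indeed, strict monotonicity already gives injectivity, so your appeal to the uniqueness in Lemma \ref{l.fractal dec} is not even needed). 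Your route buys a shorter proof with one binomial estimate instead of two; the paper's direct estimate buys a self-contained inequality that makes explicit \emph{why} lex comparison controls magnitude, which is in the spirit of the analogous fact for Macaulay expansions.
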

\begin{proof}If $d=1$ the assertion is trivial.
	Let $d>1$ and let $a,b$ be two integers with fractal decomposition  $$a= {a_d \choose d} + {a_{d-1} \choose d-1} +\cdots +{a_2 \choose 2} + {a_1 \choose 1},\ \ a_d> \cdots> a_2\ge a_1\ge 1$$
	and 
	$$b= {b_d \choose d} + {b_{d-1} \choose d-1} +\cdots +{b_2 \choose 2} + {b_1 \choose 1},\ \ b_d> \cdots> b_2\ge b_1\ge 1.$$
	If $[a]^{(d)}<_{lex} [b]^{(d)}$ then there is an index $j$ such that $[a]^{(d)}_i= [b]^{(d)}_i$ for any $i<j$ and $[a]^{(d)}_j< [b]^{(d)}_j.$ 
	Hence, ${a_i \choose i} = {b_i \choose i}$  for any $i<j$ and ${a_j \choose j} < {b_j \choose j}.$ If $j=d$ then easily $b>a$, otherwise we have 
	$$b> {b_d \choose d} + {b_{d-1} \choose d-1} +\cdots +{b_j \choose j}\ge {a_d \choose d} + {a_{d-1} \choose d-1} +\cdots +{a_i +1 \choose j} \ge a.$$
	
	Vice versa let $b>a$. We claim that $b_d\ge a_d$. Indeed, if  $b_d<a_d$ we get
	 $ a> {a_d \choose d}\ge{b_d+1 \choose d}\ge b$ contradicting  $b>a.$ 
	So  if $b_d> a_d$ we are done otherwise the statement follows by induction. 
\end{proof}

Given two sequences $\tau$ and $\sigma$ we say that $\tau$ is a truncation of $\sigma$ if $|\tau|\le|\sigma|$ and $\tau(j)=\sigma(j)$ for any $j\le|\tau|.$ For instance $(1,1,2,1,2)$ is a truncation of $[3]^2.$

Next definition introduces the main tool of the paper. The \textit{coherent fractal growths} are suitable truncations of the elements in the fractal expansion of $n.$ 
\begin{definition}
We say that  $T:=(\tau_0, \tau_1, \tau_2, \ldots)$ is a  coherent fractal growth  if  $\tau_0:=(n)$ and
$\tau_j$ is a truncation of $[\tau_{j-1}]$ for each $j\ge 1.$
\end{definition}

For instance $((3), (1,2,3), (1,1,2,1,2), (1,1,1,2,1,1), (1,1,1,1))$ is a coherent fractal growth. Indeed one can check that each elements is truncation of the expansion of the previous one. On the other hand, for instance, $((3), (1,2,3), (1,1,2), (1,1,1,2,1)) $ is not a coherent fractal growth. Indeed, $(1,1,1,2,1)$ is not a truncation of $[(1,1,2)]=(1,1,1,2)$.

\begin{remark}\label{r.coherent growth}
	Note that, in a coherent fractal growth, $\tau_d$ consists of the first $\card{\tau_d}$ elements in $[n]^d.$ Moreover, the length of the elements in  $T:=(\tau_0, \tau_1, \tau_2, \ldots )$, a coherent fractal growth, is bounded for any $d$.  Indeed, by Remark \ref{r.card and sum}  we have \begin{equation}\label{eq.coherent growth}
	|\tau_d|\le \sum \tau_{d-1}
	\end{equation} for each $d\ge 1$.
\end{remark}

In the last part of this section we prove that the bound in Remark \ref{r.coherent growth} is equivalent to the binomial expansion for a $O$-sequence. In order to relate the coherent fractal growth with $O$-sequences we need the following lemma. 
\begin{lemma}\label{l.growth}Let $[a]^{(d)}=(c_d,c_{d-1},\ldots, c_1)$ be the $d$-fractal coefficients of $a$.
	Then  the $(d+1)$-fractal coefficients of $a^{\langle d \rangle}$ are $$[a^{\langle d \rangle}]^{(d+1)}=(c_d,c_{d-1},\ldots,c_2, c_1,c_1).$$
\end{lemma}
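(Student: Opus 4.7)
The plan is to produce, directly from the $d$-fractal decomposition of $a$, a $(d+1)$-tuple $(k'_1,\ldots,k'_{d+1})$ satisfying the inequalities of Lemma~\ref{l.fractal dec} and such that $\sum_{i=1}^{d+1}\binom{k'_i}{i} = a^{\langle d \rangle}$; uniqueness of the fractal decomposition then forces this tuple to be the one encoding $[a^{\langle d \rangle}]^{(d+1)}$, and reading off the fractal coefficients via Definition~\ref{d.fractal decomposition} will yield the claim.

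Writing $a = \sum_{i=1}^{d}\binom{k_i}{i}$ with $k_d > \cdots > k_2 \geq k_1 \geq 1$ (so $c_j = k_j - j + 2$ for $j \geq 2$ and $c_1 = k_1$), I would set
\[
k'_{d+1} := k_d + 1, \qquad k'_{j} := k_{j-1} + 1 \text{ for } 3 \leq j \leq d, \qquad k'_2 := k_1, \qquad k'_1 := k_1.
\]
The required chain $k'_{d+1} > \cdots > k'_3 > k'_2 \geq k'_1 \geq 1$ translates line by line into the inequalities on $k_1, \ldots, k_d$, the only slightly delicate point being $k'_3 > k'_2$, which amounts to $k_2 + 1 > k_1$, i.e.\ $k_2 \geq k_1$. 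Applying Pascal's identity $\binom{k_1}{1} + \binom{k_1}{2} = \binom{k_1+1}{2}$ to the two lowest terms collapses the sum to
\[
\sum_{i=1}^{d+1}\binom{k'_i}{i} \;=\; \sum_{i=1}^{d}\binom{k_i+1}{i+1}.
\]

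The main obstacle is identifying this last sum with $a^{\langle d \rangle}$, since $\langle d \rangle$ is defined through the Macaulay decomposition $a = \sum_{i=j}^{d}\binom{m_i}{i}$ rather than through the fractal one, and the two decompositions coincide only when $j=1$. The idea is that the fractal decomposition is obtained from the Macaulay one by iterating the Pascal rewrite $\binom{m}{q} = \binom{m-1}{q} + \binom{m-1}{q-1}$ on the bottom term until its index drops to $1$, and that each such rewrite preserves the ``shifted sum'' $\binom{\cdot}{\cdot} \mapsto \binom{\cdot+1}{\cdot+1}$: indeed the shift of $\binom{m}{q}$ is $\binom{m+1}{q+1}$, while $\binom{m-1}{q} + \binom{m-1}{q-1}$ shifts to $\binom{m}{q+1} + \binom{m}{q} = \binom{m+1}{q+1}$ by Pascal. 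Consequently $\sum_{i=1}^{d}\binom{k_i+1}{i+1} = \sum_{i=j}^{d}\binom{m_i+1}{i+1} = a^{\langle d \rangle}$.

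Once the equality $\sum_{i=1}^{d+1}\binom{k'_i}{i} = a^{\langle d \rangle}$ is established, uniqueness in Lemma~\ref{l.fractal dec} identifies $(k'_1,\ldots,k'_{d+1})$ as the $(d+1)$-fractal decomposition of $a^{\langle d \rangle}$. Substituting the explicit values of the $k'_i$ into $[a^{\langle d \rangle}]^{(d+1)} = (k'_{d+1}-d+1,\, k'_d - d + 2, \ldots, k'_3 - 1,\, k'_2,\, k'_1)$ gives $(k_d-d+2,\, k_{d-1}-d+3,\, \ldots,\, k_2,\, k_1,\, k_1) = (c_d, c_{d-1}, \ldots, c_2, c_1, c_1)$, as required.
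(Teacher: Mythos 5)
Your argument is correct, and although it runs on the same engine as the paper's proof --- Pascal's identity mediating between the fractal and the Macaulay decompositions, together with the definition of $a^{\langle d\rangle}$ --- it is organized in a genuinely different way. The paper starts from the fractal decomposition and merges upward: when $c_1<c_2$ the fractal decomposition already is the Macaulay one (after discarding the zero binomials), and otherwise the bottom terms are repeatedly collapsed via $\binom{c_2}{2}+\binom{c_1}{1}=\binom{c_2+1}{2}$ until the Macaulay form is reached, the shift is applied, and the result is split back apart, with the cases $c_1=c_2$, $c_2=c_3,\dots$ handled by an iteration the paper only sketches (``the proof follows in a finite number of steps''). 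You instead go downward once and for all: you exhibit the candidate tuple $(k'_1,\dots,k'_{d+1})$, verify the chain of inequalities, and prove the sum identity by the clean observation that a single Pascal rewrite of the bottom term preserves the shifted sum $\sum\binom{\,\cdot\,+1}{\,\cdot\,+1}$; this invariant replaces the paper's case analysis, and you then invoke the uniqueness clause of Lemma \ref{l.fractal dec} explicitly, where the paper uses it only tacitly. What your route buys is the elimination of the open-ended iteration; what it costs is that your pivotal assertion --- that iterating the bottom rewrite on the Macaulay expansion terminates exactly at the fractal expansion --- is itself stated without proof. It does hold, but it deserves the one-line check: after the rewrites the tops at indices $j,\dots,2,1$ are $m_j-1,\,m_j-2,\dots,m_j-j+1,\,m_j-j+1$, which satisfy $k_d>\dots>k_2\ge k_1\ge 1$ (using $m_{j+1}>m_j$ and $m_j\ge j$), so uniqueness identifies the terminal tuple as the fractal decomposition. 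One further trifle: your formulas for the $k'_i$ tacitly assume $d\ge 2$ (for $d=1$ they define $k'_2$ twice), though the intended tuple $(k_1,k_1)$ is correct there. Neither point affects the soundness of the argument, and your version is arguably tighter than the original.
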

\begin{proof}The $d$-fractal decomposition of $a$ is, by definition,
	$$a={c_d+d-2\choose d}+{c_{d-1}+d-3\choose d-1}+\cdots+ {c_3+1\choose 3}+ {c_2\choose 2}+{c_1\choose 1}.$$
	If $c_1<c_2$, we get the Macaulay decomposition of $a$ by removing the binomials ${j \choose i}$ equal to 0. Since ${j \choose i}=0$ implies ${j+1 \choose i+1}=0$ we have $$a^{\langle d\rangle}={c_d+d-1\choose d+1}+{c_{d}+d-2\choose d}+\cdots+ {c_3+2\choose 4}+ {c_2+1\choose 3}+{c_1+1\choose 2}.$$
	Since ${c_1+1\choose 2}={c_1\choose 2}+{c_1\choose 1}$ we are done.
	Now we consider the case $c_1=c_2$. Then we get the following decomposition of $a$ $$a={c_d+d-2\choose d}+{c_{d-1}+d-3\choose d-1}+\cdots+ {c_3+1\choose 3}+ {c_2+1\choose 2}.$$
	Thus, if $c_3>c_2$, this representation is the Macaulay decomposition of $a$ once we remove the binomials ${j \choose i}$ equal to 0.
	$$a^{\langle d\rangle}={c_d+d-1\choose d+1}+{c_{d-1}+d-2\choose d}+\cdots+ {c_3+2\choose 4}+ {c_2+2\choose 3} =$$
	$$={c_d+d-1\choose d+1}+{c_{d-1}+d-2\choose d}+\cdots+ {c_3+2\choose 4}+ {c_2+1\choose 3}+ {c_1+1\choose 2} =$$
	$$={c_d+d-1\choose d+1}+{c_{d-1}+d-2\choose d}+\cdots+ {c_3+2\choose 4}+ {c_2+1\choose 3}+ {c_1\choose 2}+ {c_1\choose 1}.$$
	The proof follows in a finite number of steps by iterating this argument. 	
 \end{proof}

The following theorem is the main result of this section. We show that the length of the elements in a coherent fractal growth is an $O$-sequence. 
\begin{theorem}\label{t.bound on growth} 
	Let $T=(\tau_0, \tau_1, \tau_2 \ldots )$ be a list of truncations of $\Phi(n)=((n),[n],[n]^2,\ldots)$. Then the following are equivalent
	\begin{itemize}
		\item[i)] $T=(\tau_0, \tau_1, \tau_2 \ldots)$ is a coherent fractal growth;
		\item[ii)] $(|\tau_0|, |\tau_1|, |\tau_2|,  \ldots)$ is an O-sequence. 
	\end{itemize}
\end{theorem}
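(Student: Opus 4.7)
The plan is to reduce the equivalence to a single numerical identity, whence the theorem follows almost formally. The crucial observation is that, by hypothesis, each $\tau_d$ is an initial segment of $[n]^d$; since $[n]^{d+1}=[[n]^d]$, this forces $[\tau_d]$ to be itself an initial segment of $[n]^{d+1}$, of length $\sum\tau_d$ by Remark \ref{r.card and sum}. Because $\tau_{d+1}$ is \emph{a priori} also an initial segment of $[n]^{d+1}$, the condition ``$\tau_{d+1}$ is a truncation of $[\tau_d]$'' collapses to the purely numerical condition $|\tau_{d+1}|\le\sum\tau_d$. Thus, once the identity
\[
\sum\tau_d \;=\; |\tau_d|^{\langle d\rangle} \qquad (d\ge 1)
\]
is proved, it becomes $|\tau_{d+1}|\le|\tau_d|^{\langle d\rangle}$, i.e., the $O$-sequence growth condition at step $d$. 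The case $d=0$ requires no argument: $|\tau_0|=1$ by construction, and the $O$-sequence definition imposes no constraint on $h_1$, while on the other side $\tau_1$ being a truncation of $[\tau_0]=(1,\ldots,n)$ reduces to $|\tau_1|\le n$, which already follows from $\tau_1$ being a truncation of $[n]$.

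To prove the identity I would induct on $d$. For $d=1$, $\tau_1=(1,2,\ldots,a)$ has sum $\binom{a+1}{2}=a^{\langle 1\rangle}$. For the inductive step, set $a:=|\tau_d|$ and take the $d$-fractal decomposition $a=\binom{k_d}{d}+\binom{k_{d-1}}{d-1}+\cdots+\binom{k_1}{1}$ furnished by Lemma \ref{l.fractal dec}. By Corollary \ref{c.[a]^d=[1]^d-1...} one has the block decomposition $[n]^d=[1]^{d-1}\con[2]^{d-1}\con\cdots$, and a hockey-stick computation using Lemma \ref{l.DecCardandSum} shows that the first $\binom{k_d}{d}$ entries of $[n]^d$ are exactly the complete blocks $[1]^{d-1},\ldots,[k_d-d+1]^{d-1}$, contributing (via Remark \ref{r.card and sum}) a sum of $\sum_{j=1}^{k_d-d+1}\bigl|[j]^d\bigr|=\binom{k_d+1}{d+1}$. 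The remaining $b:=a-\binom{k_d}{d}$ entries form an initial segment of the next block $[k_d-d+2]^{d-1}$, hence of $[n]^{d-1}$; by the inductive hypothesis their contribution is $b^{\langle d-1\rangle}=\binom{k_{d-1}+1}{d}+\cdots+\binom{k_1+1}{2}$. Adding the two pieces produces $a^{\langle d\rangle}$.

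The main subtlety I anticipate is the mismatch between the fractal and Macaulay decompositions: the fractal form can contain vanishing binomials $\binom{k_j}{j}=0$ (when $k_j<j$), which must be stripped to recover the Macaulay form used in the definition of $\langle d\rangle$. This is painless because $k_j<j$ forces $k_j+1\le j<j+1$, so the shifted binomial $\binom{k_j+1}{j+1}$ also vanishes; hence the expression $\sum\binom{k_j+1}{j+1}$ computes $a^{\langle d\rangle}$ whether the $k_j$'s come from the fractal or the Macaulay form. Alternatively, Lemma \ref{l.growth} furnishes the $(d+1)$-fractal coefficients of $a^{\langle d\rangle}$ directly, allowing one to bypass the Macaulay form entirely.
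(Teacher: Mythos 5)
Your proposal is correct and follows essentially the same route as the paper: both reduce the equivalence to the numerical identity $\sum \tau_d = |\tau_d|^{\langle d \rangle}$, established by splitting the truncation $\tau_d$ into complete blocks according to the $d$-fractal decomposition of $|\tau_d|$ (Lemma \ref{l.fractal dec} with Lemma \ref{l.DecCardandSum}) and using Lemma \ref{l.growth} to reconcile the shifted fractal coefficients with the Macaulay operation $a \mapsto a^{\langle d \rangle}$. The only difference is cosmetic: the paper unfolds the full block decomposition $\tau_d=[k_d-d+1]^{d} \con [k_{d-1}-d+2]^{d-1} \con \cdots \con [k_1]$ in a single pass, whereas you peel off the top block and induct on $d$.
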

\begin{proof} 
	In order to prove $(i)\Rightarrow(ii)$ we need to show $|\tau_{d+1}|\le |\tau_{d}|^{\langle d \rangle}$, for each $d\ge 0$. 
	Set $a=|\tau_{d}|$ and take the $d$-fractal decomposition of $a$
	$$a={k_d\choose d}+{k_{d-1}\choose d-1}+\cdots+ {k_3\choose 3}+ {k_2\choose 2}+{k_1\choose 1}.$$
	
	If $a={n+d-1 \choose d}=\card{[n]^d}$ the statement follows by Lemma \ref{l.DecCardandSum}. Assume now $a<{n+d-1 \choose d}$.
	Since $\tau_d$ is a truncation of $[n]^d=[1]^{d-1}\con [2]^{d-1}\con \cdots \con[n]^{d-1} $ 
	and, by Lemma \ref{l.DecCardandSum},  $$\card{[k_d-d+1]^d} = {k_d \choose d}<a\le{k_d+1 \choose d}=\card{[k_d-d+2]^d} $$
	we get (denoted by $[0]$ the empty sequence)
	$$\tau_d=[k_d-d+1]^{d} \con \tau_d'$$
	where $\tau_d'$ is a truncation of $[k_d-d+2]^{d-1}$.
	Therefore, iterating this argument, we have
	$$\tau_d=[k_d-d+1]^{d} \con [k_{d-1}-d+2]^{d-1} \con \cdots \con [k_3-1]^3 \con [k_2-1]^2 \con [k_1].$$
	By equation \ref{eq.coherent growth} in Remark \ref{r.coherent growth}, we have $|\tau_{d+1}|\le \sum \tau_{d}= \sum_{i=1}^d {k_i+1 \choose d+1}=$ $$={k_d+1\choose d+1}+{k_{d-1}+1\choose d}+\cdots+  {k_2+1\choose 3}+{k_1\choose 2}+{k_1\choose 1}.$$
	This sum, by Lemma \ref{l.growth}, is equal to $a^{ \langle d \rangle}$.
	
	Vice versa, to prove $(ii)\Rightarrow (i),$ we have to show that, for each $d\ge 0,$ the sequence $\tau_{d+1}$ is a truncation of $[\tau_d]$ i.e. $|\tau_{d+1}|\le\card{[\tau_{d}]}$. 
	By using the same argument as above, we get $|\tau_{d}|^{\langle d \rangle} = \sum \tau_{d}$. Then, the statement follows since$|\tau_{d+1}|\le |\tau_{d}|^{\langle d \rangle},$ by hypothesis, and $\card{[\tau_{d}]}= \sum \tau_{d},$  by Remark \ref{r.coherent growth}.

\end{proof}

Let's check, for instance, that $H:=(1,3,3,4)$ is an $O$-sequence. We write a sequence of truncations of $\Phi(3)$ of length $1,3,3,4$ respectively. We get
 $$T:=((3), (1,2,3), (1,1,2), (1,1,1,2)).$$
 It is a coherent fractal growth. Indeed, by definition each sequence is a truncation of the bracket of the previous one, e.g. $[(1,2,3)]=(1,1,2)\con(1,2,3)$ and $[(1,1,2)]=(1,1,1,2).$ 
Now, we check that $H:=(1,3,5,8)$ is not an $O$-sequence. Indeed, take a coherent fractal growth   
$$((3), (1,2,3), (1,1,2,1,2), \tau_3)$$
where the first three sequences are  truncations  of length $1,3,5$ of the elements in  $\Phi(3)$.
Then $\tau_3$ should be a truncation of $[(1,1,2,1,2)]=(1,1,1,2,1,1,2)$
that has length 7. Thus $\card{\tau_3}\le 7$ and $(1,3,5,8)$ is not an $O$-sequence. 

\section{Fractal expansions and Homological Invariants}\label{s.Fractal and Homol}
In Section \ref{s.fractal} we introduced a novel approach to describe the $O$-sequences. In this section we show the "algebraic"  meaning of a coherent fractal growth. We directly relate these sequences to lex segment ideals and their homological invariants. In particular, the Eliahou-Kervaire formula is naturally applied to our case. Therefore, the fractal expansion of $n$ is used in Proposition \ref{p.EK2} to compute the Betti numbers of a lex algebra. See Section 2.1.2 in \cite{HH} for a complete discussion on monomial orders, including the lexicographic order.

Let $a$ and $d$ be positive integers. Let $[a]^{(d)}:=(c_d, c_{d-1}, \ldots, c_2, c_1)$ be the $d$-fractal coefficient of $a$, see Lemma \ref{l.fractal dec} and Definition \ref{d.fractal decomposition}.
We associate to $a$ and $d$ a monomial $X_a^{(d)}$ of degree $d$ in the variables $X:=\{x_1, \ldots x_n\}$ in such a way  $$X_a^{(d)}:=x_{c_1}x_{c_2}\cdots x_{c_{d-1}} x_{c_d}.$$ 

Vice versa, a monomial $T=x_{c_1}x_{c_2}\cdots x_{c_{d-1}} x_{c_d}$ of degree $d$ in the variables in $X$ identifies a $d$-upla $(c_d,c_{d-1}, \ldots, c_1)$ such that $c_i\ge c_{i+1}$ for each $i$.

\begin{remark}\label{r.i-th monomial in lex order}
	An immediate consequence of Lemma \ref{l.lexord} is that 	$X_a^{(d)}>_{lex} X_b^{(d)}$ if and only if $a>b$,  with respect the lexicographic order induced by $x_1<x_2<\cdots<x_n.$
	Therefore, with respect the same order, $X_a^{(d)}$ is the $a$-th greatest monomial (in the variables in $X$) of degree $d$.
\end{remark}
Let $S:=k[X]=k[x_1,\ldots, x_n]$ be the standard graded polynomial ring. 

	We set $$\mathcal G(X)^{(d)}_{\le t}:=\{X_a^{(d)} \ |\ a\le t \}$$
	and 
	$$\mathcal G(X)^{(d)}_{> t}:=\{X_a^{(d)} \ |\ a> t \}.$$
	
	$S_d$ is spanned by the monomials in $\mathcal G(X)^{(d)}_{\le t} \cup \mathcal G(X)^{(d)}_{> t}.$
	
By Remark \ref{r.i-th monomial in lex order}	$G(X)^{(d)}_{> t}$ is a lex set of monomials of degree $d$  with respect the degree lexicographic order $x_1<x_2<\cdots<x_n.$

	Given $T:=\{\tau_0, \tau_1, \tau_2,\ldots \}$ a coherent fractal growth, we set 
$I(T)_d:= \langle \mathcal G(X)^{(d)}_{> |\tau_d|}\rangle_k$
the $k$-vector space spanned by the monomial in $\mathcal G(X)^{(d)}_{> |\tau_d|}$.
Then, by Theorem, \ref{t.bound on growth} and Theorem \ref{Macaulay}, the following result holds.
\begin{proposition}
$I(T):=\oplus_d I_d(T)\subseteq R$ is a lex segment ideal and $H_{R/I}(d)=|\tau_d|.$
\end{proposition}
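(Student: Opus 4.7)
The plan is to verify three assertions in turn: (i) in each degree $d$ the $k$-vector space $I(T)_d$ is a lex segment of $S_d$; (ii) the Hilbert function equality $H_{R/I(T)}(d)=|\tau_d|$; and (iii) the substantive step that $I(T)$ is actually closed under multiplication by elements of $S$, hence is an ideal.

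For (i), I would appeal directly to Remark \ref{r.i-th monomial in lex order}: the monomial $X_a^{(d)}$ is the $a$-th largest monomial of degree $d$ under the lex order induced by $x_1<\cdots<x_n$. Hence $\mathcal{G}(X)^{(d)}_{>|\tau_d|}$ is exactly the set of the $\binom{n+d-1}{d}-|\tau_d|$ topmost monomials of $S_d$, which is a lex segment by definition. For (ii), by Lemma \ref{l.DecCardandSum} one has $\dim_k S_d=\binom{n+d-1}{d}=\card{[n]^d}$, and $\mathcal{G}(X)^{(d)}_{\le|\tau_d|}$ is a $k$-basis of a direct complement to $I(T)_d$ in $S_d$ of cardinality $|\tau_d|$, giving $\dim_k(S/I(T))_d=|\tau_d|$ at once.

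The main obstacle is (iii). Here I would combine Theorem \ref{t.bound on growth} with Macaulay's Theorem \ref{Macaulay}: by the former, $(|\tau_d|)_{d\ge 0}$ is an $O$-sequence, and by the latter it is the Hilbert function of some standard graded $k$-algebra. The standard constructive proof of Macaulay's theorem, recalled in \cite{BH} Chapter 4, realizes any $O$-sequence as $H_{S/L}$ for the unique lex segment ideal $L$ whose degree-$d$ component is spanned by the top $\binom{n+d-1}{d}-|\tau_d|$ monomials. Since by (i) this is precisely $I(T)_d$, we conclude $I(T)=L$, and in particular $I(T)$ is an ideal with the claimed Hilbert function.

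A more self-contained alternative would be to check $S_1\cdot I(T)_d \subseteq I(T)_{d+1}$ directly: given $X_a^{(d)}$ with $a>|\tau_d|$ and a variable $x_i$, write the product as some $X_b^{(d+1)}$ via the $(d+1)$-fractal decomposition, and use Lemma \ref{l.growth} together with the bound $|\tau_{d+1}|\le|\tau_d|^{\langle d\rangle}$ to conclude $b>|\tau_{d+1}|$. This route avoids invoking the constructive proof of Macaulay's theorem but demands careful bookkeeping of how multiplication by $x_i$ transforms the $d$-fractal coefficients, which is exactly the content encoded in Lemma \ref{l.growth}.
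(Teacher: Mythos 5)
Your proposal is correct and follows essentially the same route as the paper, which proves the proposition in one line by citing Theorem \ref{t.bound on growth} together with Macaulay's Theorem \ref{Macaulay}. Your write-up merely makes explicit the detail the paper leaves implicit — namely that closure under multiplication requires the constructive half of Macaulay's theorem (lex segments whose codimensions form an $O$-sequence are closed under multiplication by $S_1$, cf.\ \cite{BH} Chapter 4), and your sketched self-contained alternative via Lemma \ref{l.x_1 cdot X^d} is exactly the mechanism the paper later uses in the proof of Theorem \ref{main}.
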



Given a minimal free resolution of a lex segment ideal
$$0 \to \bigoplus_{j} S(-j)^{\beta_{p,p+j}} \to\cdots\to \bigoplus_{j} S(-j)^{\beta_{1,1+j}} \to S\to S/I\to 0  $$
the Betti numbers can be computed by the Eliahou-Kervaire  formula, see \cite{EK}. See also \cite{HH} equation 7.7 of Section 7.

\begin{theorem}[Eliahou-Kervaire formula]\label{t.EK}
	Let $I$ be a lex segment ideal. For $u \in\mathcal G(I)$,  a monomial minimal generator of $I$, let $m(u)$ denotes the largest index $j$ such that $x_j$ divides $u.$ Let $m_{kj}$ be the number of monomials  $u\in \mathcal G(I)$  with $m(u) = k$.
	  Then
	$$\beta_{i,i+j}(S/I)=\sum_{u\in \mathcal G(I)_j} {m(u)-1\choose i} =\sum_{i=1}^n {k-1\choose i-1}m_{kj}.$$
\end{theorem}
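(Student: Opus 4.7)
The plan is to treat this as the construction and analysis of the explicit Eliahou--Kervaire resolution, from which the Betti number count falls out by a direct combinatorial count. The essential input is that lex segment ideals are stable monomial ideals: if $u$ is a monomial in $I$ and $i<m(u)$, then $x_i\cdot u/x_{m(u)}\in I$. (In fact they are strongly stable / Borel-fixed, which is even more than needed.)

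First I would establish the \emph{Eliahou--Kervaire decomposition}: every monomial $w \in I$ can be written uniquely as $w = v\cdot u$ with $u\in \mathcal G(I)$ a minimal generator and $v$ a monomial all of whose variables have index $\leq m(u)$. Existence follows by induction on degree using stability; uniqueness follows by choosing $u$ to correspond to the lexicographically maximal factorization. This decomposition is the combinatorial engine that powers everything else.

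Next I would build the candidate resolution $F_\bullet \to S/I$. Set $F_i$ to be the free $S$-module with basis symbols $e(u;\sigma)$ indexed by $u \in \mathcal G(I)$ and subsets $\sigma=\{j_1<\cdots<j_i\}\subseteq\{1,\ldots,m(u)-1\}$, and declare $e(u;\sigma)$ to have homological degree $i$ and internal degree $\deg(u)+i$. Define a differential of the shape
\[
d(e(u;\sigma))=\sum_{\ell=1}^{i}(-1)^{\ell}\bigl[x_{j_\ell}\,e(u;\sigma\setminus j_\ell)\ -\ \varphi_{j_\ell}(u,\sigma)\bigr],
\]
where $\varphi_{j_\ell}(u,\sigma)$ is the symbol obtained by applying the Eliahou--Kervaire decomposition to the monomial $x_{j_\ell}\cdot u$: write $x_{j_\ell}\cdot u = v\cdot u'$ with $u'\in\mathcal G(I)$ and $v$ only involving variables of index at most $m(u')$, and set $\varphi_{j_\ell}(u,\sigma)=v\cdot e(u';\sigma\setminus j_\ell)$.

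The main obstacle is showing this complex is exact. I would do this by filtering $F_\bullet$ according to the lex order on minimal generators and identifying the associated graded complex with a direct sum of truncated Koszul complexes on $\{x_1,\ldots,x_{m(u)-1}\}$, which are exact in positive degrees; exactness of the filtered complex then lifts from the associated graded by a standard spectral sequence / induction argument. Minimality is then immediate: every coefficient in the differential lies in the irrelevant ideal, because the variables $x_{j_\ell}$ and the factors $v$ from the decomposition always have positive degree (the latter since $j_\ell < m(u)$ forces $x_{j_\ell}\cdot u$ not to be a minimal generator, so $v\neq 1$).

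Finally, I would simply count. In homological degree $i$, the generators in internal degree $i+j$ are the symbols $e(u;\sigma)$ with $u\in\mathcal G(I)_j$ and $|\sigma|=i$, $\sigma\subseteq\{1,\ldots,m(u)-1\}$, giving $\binom{m(u)-1}{i}$ choices for each $u$. Summing yields
\[
\beta_{i,i+j}(S/I)=\sum_{u\in\mathcal G(I)_j}\binom{m(u)-1}{i},
\]
and regrouping the sum by the value $k=m(u)$, using $m_{kj}=\#\{u\in\mathcal G(I)_j:m(u)=k\}$, gives the second equality. (One verifies that the indexing convention in the statement matches this count by recalling $\beta_i(I)=\beta_{i+1}(S/I)$, which absorbs the shift between $\binom{m(u)-1}{i}$ and $\binom{k-1}{i-1}$.)
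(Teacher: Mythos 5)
The paper gives no proof of Theorem \ref{t.EK} at all; it is quoted from \cite{EK} (see also \cite{HH}, Section 7), and your outline follows exactly the classical route of those sources: canonical decomposition, explicit resolution on symbols $e(u;\sigma)$, exactness via a filtration (your filtration by lex order on generators is essentially the linear-quotients/iterated-mapping-cone argument of \cite{HH}, which uses that for stable ideals the successive colon ideals are $(x_1,\ldots,x_{m(u)-1})$), minimality, and a count. However, there is a concrete error at the foundation. With your own conventions ($m(u)$ the largest index of a variable dividing $u$, stability meaning $x_i\, u/x_{m(u)}\in I$ for $i<m(u)$), the Eliahou--Kervaire decomposition goes the other way: every monomial $w\in I$ factors uniquely as $w=u\cdot v$ with $u\in\mathcal G(I)$ and every variable of $v$ of index \emph{at least} $m(u)$, i.e.\ $m(u)\le\min(v)$, not ``at most $m(u)$'' as you wrote. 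Under your condition even existence fails: for $I=(x_1)\subseteq k[x_1,x_2]$ the monomial $x_1x_2\in I$ admits no factorization $v\cdot u$ with $v$ supported in variables of index $\le m(x_1)=1$. Since the map $\varphi_{j_\ell}$, the filtration, and the minimality argument (the factor $v\ne 1$) are all built on this decomposition, the inequality must be reversed throughout; once that is done, the sketch is sound, modulo the usual admissibility bookkeeping (when $x_{j_\ell}u=u'v$ one must check, or define away, symbols $e(u';\sigma\setminus j_\ell)$ with $\sigma\setminus j_\ell\not\subseteq\{1,\ldots,m(u')-1\}$).

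Your closing parenthetical also does not hold up, though here the fault is partly in the statement as printed. The complex you build, with $e(u;\emptyset)$ in homological degree $0$, resolves $I$, not $S/I$, so your count establishes $\beta_{i,i+j}(I)=\sum_{u\in\mathcal G(I)_j}\binom{m(u)-1}{i}=\sum_{k=1}^{n}\binom{k-1}{i}m_{kj}$ (this is equation (7.7) of \cite{HH}; note also that the summation variable in the theorem's last sum should be $k$, not $i$). The shift $\beta_{i,d}(I)=\beta_{i+1,d}(S/I)$ moves the homological index and the strand together, so it cannot convert $\binom{m(u)-1}{i}$ into $\binom{k-1}{i-1}$ while keeping the same $i$, $j$ and $m_{kj}$: the two sums displayed in the theorem are in fact unequal as written. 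For $I=(x_1,x_2)\subseteq k[x_1,x_2]$ and $i=j=1$, the first sum is $\binom{0}{1}+\binom{1}{1}=1$ (which is indeed $\beta_{1,2}(I)$), while the second is $\binom{0}{0}m_{11}+\binom{1}{0}m_{21}=2$ (which is $\beta_{1,1}(S/I)$). The consistent readings are $\beta_{i,i+j}(I)=\sum_{k}\binom{k-1}{i}m_{kj}$ or, equivalently, $\beta_{i,i+j}(S/I)=\sum_{k}\binom{k-1}{i-1}m_{k,j+1}$; your proof, with the decomposition corrected, does prove these, and the sentence ``one verifies that the indexing convention matches'' should be replaced by this explicit re-indexing rather than asserted, since no convention makes the displayed chain of equalities literally true.
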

This result can be written in terms of coherent fractal growth.
\begin{proposition}\label{p.EK2}
	Given $T:=\{\tau_0, \tau_1, \tau_2,\ldots \}$ a coherent fractal growth. Then 
	$$\beta_{i,i+j}(S/I(T))=\sum_{a=|\tau_j|+1 }^{\card{[\tau_{i-1}]}} {[a]^{(j)}_j-1\choose i}=\sum_{i=1}^n {k-1\choose i-1}w_{kj} $$
	where $w_{kj}$ is the number of occurrence of "$k$" in $\tau_i'$ with $[\tau_{i-1}]=\tau_i\con \tau_i'.$
\end{proposition}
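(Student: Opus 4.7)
The plan is to apply the Eliahou--Kervaire formula (Theorem~\ref{t.EK}) to the lex segment ideal $I(T)$ and then translate the resulting sum into the combinatorial language of the coherent fractal growth $T$. The first step is to pin down the minimal monomial generators of $I(T)$ in each degree $j$. By construction $I(T)_j$ is spanned by $\mathcal{G}(X)^{(j)}_{>|\tau_j|}$; since $I(T)$ is lex, the subspace $\mathfrak{m}\cdot I(T)_{j-1}\subseteq S_j$ is again a lex segment, whose colength in $S_j$ equals $|\tau_{j-1}|^{\langle j-1\rangle}=\sum\tau_{j-1}=|[\tau_{j-1}]|$ by Theorem~\ref{t.bound on growth} together with Remark~\ref{r.card and sum}. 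Consequently the minimal monomial generators of $I(T)$ in degree $j$ are precisely
\[
\mathcal{G}(I(T))_j=\bigl\{\,X_a^{(j)} : |\tau_j|<a\le|[\tau_{j-1}]|\,\bigr\},
\]
which justifies the range of summation appearing in the Proposition.

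Next, for $u=X_a^{(j)}=x_{c_1}\cdots x_{c_j}$ I would read off the invariant $m(u)$ required by Theorem~\ref{t.EK} as an entry of the $j$-fractal coefficients $[a]^{(j)}$ of $a$: directly from Definition~\ref{d.fractal decomposition} one identifies it with the entry $[a]^{(j)}_j$. Plugging this into the Eliahou--Kervaire sum gives the first equality of the Proposition. The second equality is then obtained by grouping the generators according to the common value of $[a]^{(j)}_j$, and the combinatorial content of the statement is that this grouping is exactly what the tail sequence $\tau_j'$ records.

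To make this last point precise, I would show that $[\tau_{j-1}]$ is the initial segment of $[n]^j$ of length $|[\tau_{j-1}]|$. Indeed, by Lemma~\ref{l.frac^d} applied with exponent $1$, if $\tau_{j-1}=(s_1,\ldots,s_m)$ is the given truncation of $[n]^{j-1}$, then $[\tau_{j-1}]=[s_1]\con\cdots\con[s_m]$, which is exactly the prefix of $[n]^j=[[n]^{j-1}]=[s_1]\con[s_2]\con\cdots$ of length $\sum\tau_{j-1}=|[\tau_{j-1}]|$ (Remark~\ref{r.card and sum}). Combined with Theorem~\ref{t.k_d}, the $a$-th entry of $[\tau_{j-1}]$ equals $[a]^{(j)}_j$ for every $a\le|[\tau_{j-1}]|$, so the tail $\tau_j'$ in the decomposition $[\tau_{j-1}]=\tau_j\con\tau_j'$ is literally the list $\bigl([a]^{(j)}_j\bigr)_{a=|\tau_j|+1}^{|[\tau_{j-1}]|}$. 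Hence the multiplicity with which $k$ occurs in $\tau_j'$ equals the Eliahou--Kervaire count of generators $u\in\mathcal{G}(I(T))_j$ with $m(u)=k$, yielding the second equality. The main step requiring care is precisely this combinatorial matching between $\tau_j'$ and the fractal entries of the generators; once it is in place, Proposition~\ref{p.EK2} is a direct rewriting of Theorem~\ref{t.EK}.
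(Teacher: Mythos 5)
Your strategy is exactly the paper's: the paper's entire proof of Proposition~\ref{p.EK2} is the single sentence that it is immediate from Theorem~\ref{t.EK} and Theorem~\ref{t.k_d}, and your three steps fill in precisely what that sentence gestures at. Two of them are correct and worth having on record: the identification of the generators, $\mathcal{G}(I(T))_j=\{X_a^{(j)} : |\tau_j|<a\le\card{[\tau_{j-1}]}\}$, using the minimal growth of lex segments together with the identity $|\tau_{j-1}|^{\langle j-1\rangle}=\sum\tau_{j-1}=\card{[\tau_{j-1}]}$ (which is indeed established inside the proof of Theorem~\ref{t.bound on growth}); and the observation, via Lemma~\ref{l.frac^d} and Remark~\ref{r.card and sum}, that $[\tau_{j-1}]$ is an initial segment of $[n]^j$, so that by Theorem~\ref{t.k_d} the tail $\tau_j'$ is literally the list $\bigl([a]^{(j)}_j\bigr)_{a=|\tau_j|+1}^{\card{[\tau_{j-1}]}}$. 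You also correctly repair the statement's index slip ($\tau_{i-1}$, $\tau_i'$ should read $\tau_{j-1}$, $\tau_j'$).

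The gap is the step you dispatch with ``directly from Definition~\ref{d.fractal decomposition} one identifies $m(u)$ with $[a]^{(j)}_j$.'' That identification is false as stated, and it is the one point in the whole argument that genuinely needs care. By Definition~\ref{d.fractal decomposition} the tuple $[a]^{(j)}=(c_j,\ldots,c_1)$ is non-increasing and $X_a^{(j)}=x_{c_1}\cdots x_{c_j}$, so the \emph{largest} index of a variable dividing $X_a^{(j)}$ --- which is what Theorem~\ref{t.EK} calls $m(u)$ --- is $[a]^{(j)}_1=c_j$, whereas $[a]^{(j)}_j=c_1=k_1$ is the \emph{smallest} index. The source of the clash is the order convention: here lex segments are taken with respect to $x_1<\cdots<x_n$, so $I(T)$ becomes a standard lex (hence stable) ideal only after the renaming $z_i:=x_{n+1-i}$, under which the Eliahou--Kervaire invariant of $u=X_a^{(j)}$ is $n+1-c_1=n+1-[a]^{(j)}_j$, and its contribution is $\binom{n-[a]^{(j)}_j}{i}$ rather than $\binom{[a]^{(j)}_j-1}{i}$. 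This is not pedantry about conventions: for $n=2$ and $T=((2),(1),(1),\ldots)$ one gets $I(T)=(x_2)$, a principal ideal with no syzygies, yet the contribution $\binom{[2]^{(1)}_1-1}{1}=\binom{1}{1}=1$ would predict a nonzero first syzygy, while $\binom{n-[2]^{(1)}_1}{1}=\binom{0}{1}=0$ is correct. So the defect is already in the displayed formula of the Proposition (the paper's one-line proof glosses over it, and the second sum needs the same reconciliation); a correct proof along your lines must flag this and replace $\binom{[a]^{(j)}_j-1}{i}$ by $\binom{n-[a]^{(j)}_j}{i}$, equivalently weight $w_{kj}$ by binomials in $n-k$, after which your tail-matching argument goes through verbatim. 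As written, the sentence claiming the identification ``directly from the definition'' is the step that fails.
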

\begin{proof}
	It is an immediate consequence of Theorem \ref{t.EK} and Theorem \ref{t.k_d}.
\end{proof}
\section{The Hilbert function of a bigraded algebra and the Fractal Functions}\label{s.Bilex Algebras}
Let $k$ be an infinite field, and let $R :=k[x_1, \cdots x_n , y_1,\cdots, y_m ]$ be the polynomial ring in $n+m$ indeterminates with the grading defined by $\deg x_i = (1, 0)$ and $\deg y_j = (0, 1).$ Then $R=\oplus_{(i,j)\in \N^2}R_{(i,j)}$ where $R_{(i,j)}$ denotes the set of all homogeneous elements in $R$ of degree $(i,j).$ Moreover,  $R_{(i,j)}$ is generated, as a $k$-vector space, by the monomials $x_1^{i_1}\cdots x_n^{i_n}y_1^{j_1}\cdots y_m^{j_m}$ such that $i_1+\cdots+i_n=i$ and $j_1+\cdots+j_m=j.$
An ideal $I\subseteq R$ is called a $bigraded$ ideal if it is generated by homogeneous elements with respect to this grading. A bigraded algebra $R/I$ is the quotient of $R$ with a bigraded ideal $I.$  The Hilbert function of a bigraded algebra $R/I$ is defined such that $H_{R/I}:\N^2\to \N$ and 
$H_{R/I}(i,j):=\dim_k (R/I)_{(i,j)}=\dim_k R_{(i,j)} -\dim_k I_{(i,j)}$ where $I_{(i,j)} =I\cap R_{(i,j)}$ is the set of the bihomogeneous elements of degree $(i,j)$ in $I.$

From now on, we will work with the degree lexicographical order on $R$ induced by $x_n > \cdots > x_1 > y_m >\cdots > y_1.$ With this ordering, we recall the definition of bilex ideal, introduced and studied in \cite{ACD}. We refer to \cite{ACD} for all preliminaries and for further results on bilex ideals.

\begin{definition}[\cite{ACD}, Definition 4.4]\label{Def4.4} 
	A set of monomials $L\subseteq R_{(i,j)}$ is called $bilex$ if for every monomial $uv\in L,$ where $u\in R_{(i,0)}$ and $v\in R_{(0,j)},$  the following conditions are satisfied:
	\begin{itemize}
		\item if $u'\in R_{(i,0)}$ and $u' > u$, then $u'v \in L;$
		\item if $v'\in R_{(0,j)}$ and $v' > v$, then $uv' \in L.$
	\end{itemize}
	A monomial ideal $I \subseteq R$ is called a $bilex\ ideal$ if $I_{(i,j)}$ is generated as $k$-vector space by a bilex set of monomials, for every $i,j\ge 0$.
\end{definition}

Bilex ideals play a crucial role in the study of the Hilbert function of bigraded algebras.
\begin{theorem}[\cite{ACD},Theorem 4.14] Let $J\subseteq R$ be a bigraded ideal. Then there exists a bilex ideal $I$
	such that $H_{R/I}=H_{R/J}.$
\end{theorem}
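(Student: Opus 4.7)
The plan is to mimic the two-step lexification used in the classical proof of Macaulay's theorem, carried out separately in the $x$- and $y$-variables. First I would reduce to the case where $J$ is a monomial bigraded ideal by passing to a bigraded initial ideal with respect to a term order refining the bigrading; this preserves $H_{R/J}$ since bigraded Gröbner deformations are flat in each bidegree, and it also reduces the bilex verification to a purely combinatorial check on monomial supports.

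Next I would perform an $x$-lexification fiber by fiber: for each bidegree $(i,j)$ and each monomial $v\in R_{(0,j)}$, the set $\{u\in R_{(i,0)} : uv\in J\}$ has some cardinality $n_v$, and I would replace this fiber by the lex-largest $n_v$ monomials in $R_{(i,0)}$. Using the one-variable version of Macaulay's Theorem \ref{Macaulay} applied to the $k[x_1,\ldots,x_n]$-module obtained by slicing $R/J$ at each $y$-monomial, one checks that the result $J^x$ is again a bigraded ideal with $H_{R/J^x}=H_{R/J}$ whose $x$-fibers are all lex-closed. Applying the symmetric $y$-lexification to $J^x$ then produces the candidate bilex ideal $I$.

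The main obstacle is to verify that this second $y$-compression does not destroy the $x$-lex structure built in the first step, so that $I$ satisfies both conditions of Definition \ref{Def4.4}. I expect this to reduce to the combinatorial statement that in an $x$-lex ideal the fiber size $n_v$ depends on $v$ in a lex-monotone way: if $v'>v$ then $n_{v'}\ge n_v$, and the lex-largest $n_v$ monomials in $R_{(i,0)}$ are contained in the lex-largest $n_{v'}$ monomials. Under this monotonicity the two compressions commute at the level of bidegree-wise cardinalities, so that after $y$-lexifying each fiber is still an initial segment of the $x$-lex order of the correct cardinality. Once this compatibility is established, $H_{R/I}=H_{R/J}$ holds by construction, the ideal property is inherited from the two separate Macaulay-type arguments in each set of variables, and both bilex conditions follow at once.
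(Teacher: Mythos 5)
You should first note that the paper does \emph{not} prove this statement: it is imported verbatim from \cite{ACD} (Theorem 4.14), where the argument runs through bigeneric initial ideals and bistable ideals. Your two-step compression is therefore a genuinely different route, and its skeleton is sound and attractive (it avoids genericity and is characteristic-free): reduction to a monomial bigraded ideal works for \emph{any} term order, since the bigrading is by monomials, so each $\dim_k J_{(i,j)}$ is preserved; in the $x$-compression each fiber $A_v:=\{u\in R_{(i,0)} : uv\in J\}$, as $i$ varies, is an ideal of $k[x_1,\ldots,x_n]$, its degreewise lexification is again an ideal by (the strong form of) Macaulay's theorem, and the containments $A_v\subseteq A_{y_tv}$ give nested lex segments, hence closure of $J^x$ under the $y$-variables. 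Up to this point your argument is correct.

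The genuine gap is that your key lemma is false as stated. It is \emph{not} true that in an $x$-fiberwise-lex ideal $v'>v$ implies $n_{v'}\ge n_v$: take $J=(x_2y_1)\subseteq k[x_1,x_2,y_1,y_2]$ with the paper's order $x_2>x_1>y_2>y_1$. Every $x$-fiber of $J$ is a lex segment (it is $\{u : x_2 \text{ divides } u\}$, the top segment in its degree, when $y_1$ divides $v$, and empty otherwise), so $J$ equals its own $x$-lexification; yet in bidegree $(1,1)$, with $v=y_1$ and $v'=y_2>v$, one has $n_{v}=1>0=n_{v'}$. What the second compression actually needs is the \emph{mirrored} monotonicity, in the $x$-coordinate: after step 1, for $u'>u$ in $R_{(i,0)}$ the $y$-fiber sizes $m_u:=\#\{v\in R_{(0,j)} : uv\in J^x\}$ satisfy $m_u\le m_{u'}$, because $uv\in J^x$ means $u$ lies in the lex segment $\{w : wv\in J^x\}$, hence so does $u'$. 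This monotonicity in $u$, together with $m_u\le m_{x_tu}$ (which is just the ideal property of $J^x$), is exactly what makes the $y$-compression produce nested lex segments over comparable $u$'s: it yields both conditions of Definition \ref{Def4.4}, closure under multiplication by the $x$-variables, and shows that the $x$-fibers of the result are still lex segments, since $\{u : m_u\ge r\}$ is upward closed in the lex order for every threshold $r$. So your architecture goes through, but only after replacing the false monotonicity in $v$ by the correct monotonicity in $u$; as written, the step ``the two compressions commute at the level of bidegree-wise cardinalities'' rests on a claim that the counterexample above refutes.
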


In \cite{F} was solved the question of characterize the Hilbert functions of bigraded algebras of $k[x_1,x_2,y_1,y_2]$ by introducing the Ferrers functions. In this section we generalize these functions by introducing the fractal functions, see Definition \ref{d.F}. We prove, Theorem \ref{main}, that these  classify the Hilbert functions of bigraded algebras.

We need some preparatory material.  We denote by $\mathcal M^{a\times b}(U)$ the set of all the matrices with size $(a,b)$ -  $a$ rows and $b$ columns -  and entries in a set $U\subseteq \N.$   
Given a matrix $M=(m_{ij})\in \mathcal M^{a\times b}(U)$ 
we denote by $$\sum M:=\sum_{i\le a}\sum_{j\le b} m_{i,j}.$$

Next definition introduces the objects we need in this section. The $\le$ symbol denote the partial order on $\N^2$ given by componentwise comparison. 
\begin{definition}\label{d.FerrersMatrix}
	A \textit{Ferrers matrix} of size $(a,b)$ is a matrix $M=(m_{ij})\in \mathcal M(\{0,1\})^{a\times b}$ such that \begin{center}
		if $m_{ij}=1$ then $m_{i'j'}=1$ for any $(i',j')\le (i,j)$. 
	\end{center}
	We set by $\mathcal F^{a\times b}$ the family of all the Ferrers matrices of size $(a,b).$
\end{definition}

In the following definition we introduce expansions of a matrix. 
\begin{definition} Let $M\in \mathcal M(U)^{a\times b}$ be a matrix of size $(a,b)$ and let $\textbf{v}:=(v_{1},\ldots,v_{a} )\in \N^a$ and $\textbf{w}:=(w_{1},\ldots,w_{b} )\in \N^b$ be vectors of non negative integers. We denote by $M^{\langle \textbf{v}, \bullet \rangle}$ an element in $M(U)^{\sum\textbf{v}\times b}$ constructed by \begin{center}
		repeating the $i$-th row of $M$ $v_{i}$ times	, for $i=1,\ldots,a$.
	\end{center} 
	We denote by $M^{\langle \bullet, \textbf{w} \rangle}$ an element in $M(U)^{a\times \sum\textbf{v}_2}$ constructed by \begin{center}
		repeating the $j$-th column of $M$ $w_{j}$ times, for $j=1,\ldots,b$.
	\end{center} 
\end{definition}

\begin{remark}
	The expansions of a Ferrers matrix  are also Ferres matrices. Take, for instance, $$M=\left(\begin{array}{ccc}
	1 & 1 & 1 \\
	1 & 1 & 0 \\
	1 & 1 & 0 \\
	1 & 0 & 0 \\
	\end{array}\right)\in \mathcal F^{(4, 3)}.$$
	Set $\textbf{v}:=(2,1,0,3)$ and $\textbf{w}:= (3,1,3).$ Then
{\small	$$M^{\langle \textbf{v},\bullet \rangle}=\left(\begin{array}{ccc}
	1 & 1 & 1 \\
	1 & 1 & 1 \\
	1 & 1 & 0 \\
	1 & 0 & 0 \\
	1 & 0 & 0 \\
	1 & 0 & 0 \\
	\end{array}\right)\in \mathcal F^{(6, 3)}\ \ \ \ M^{\langle \bullet, \textbf{w} \rangle}=\left(\begin{array}{ccccccc}
	1 & 1 & 1 &  1 & 1  & 1  & 1 \\
	1 & 1 & 1 &  1 & 0  & 0  & 0  \\
	1 & 1 & 1 &  1 & 0  & 0  & 0 \\
	1 & 1 & 1 &  0 & 0  & 0  & 0 \\
	\end{array}\right)\in \mathcal F^{(4, 7)}.$$}
\end{remark}	


Given $M, N\in \mathcal F^{a\times b}$ we say that $M\le N$ if and only if $m_{ij}\le n_{ij}$ for any $i,j.$

We are ready to introduce the fractal functions.  
\begin{definition}\label{d.F} Let $H:\N^2\to \N$ be a numerical function. 
	We say that $H$ is a \textit{fractal function} if $H(0,0)=1$ and, for any $(i,j)\in \N^2,$ there exists a matrix of $M_{ij}\in \mathcal F^{{i-1+n \choose n-1}\times {j-1+m \choose m-1}}$ with $\sum M_{ij}=H(i,j)$ and such that all the matrices satisfy the condition  
	$$\left\{\begin{array}{ll}
	M_{ij}\le M_{i-1,j}^{\langle[n]^{i-1},\bullet\rangle}& \text{if}\ i>0 \\
	& \\
	M_{ij}\le M_{i,j-1}^{\langle\bullet,[m]^{j-1}\rangle}& \text{if}\ j>0 \\
	\end{array}\right.$$
\end{definition}

\begin{remark}
	Let $H:\mathbb N^2 \to \mathbb N$ be the numerical function
	$$H(i,j)= {i-1+n\choose n-1}{j-1+m\choose m-1}.$$ For any $i,j\in \mathbb N,$ there is only one element in $M_{ij}\in\mathcal F^{{i-1+n\choose n-1}\times{j-1+m\choose m-1}}$ satisfying the condition in Definition \ref{d.F} that is the matrix with all "1" entries. Therefore $H$ is a fractal function. 
\end{remark}

\begin{remark}If $n=m=2$, the definition of fractal functions agrees with Definition 3.3. in \cite{F}.  Indeed it is enough to write each partition $\alpha_{ij}=(a_1,a_2, \ldots)$ as a matrix $M_{ij}=(m_{hk})\in\mathcal F^{(i+1)\times (j+1)}$ where $m_{hk}=1$ iff $k\le a_k$ otherwise $m_{hk}=0$. In this case the expansions are given by the elements in $\Phi(2):=((2),(1,2),(1,1,2),(1,1,1,2),\ldots)$.
\end{remark}

In general the matrices $M_{ij}$ are not uniquely determined by the conditions of Definition \ref{d.F}, even in "small" cases. 
\begin{example}\label{e.is fractal?} Set $n=m=2$ and let $H:\N^2\to \N$ be the numerical function
		$$ H:= \begin{array}{l|lllll}
		& 0 & 1 & 2 & 3 & \ldots\\
		\hline
		0  & 1 & 2 & 3 & 0 &\ldots\\
		1  & 2 & 4 & 3 & 0 & \ldots\\
		2  & 3 & 3 & 3 & 0 &\ldots\\
		3  & 0 & 0 & 0 & 0 &\ldots\\
		\end{array}
		$$
The only possibility for $M_{11}$ is  $$M_{11}=\left(\begin{array}{ccc}
1 & 1 \\
1 & 1 \\
\end{array}\right)\in \mathcal F^{(2, 2)}.$$
Thus, we have no restriction on $M_{21}$ and $M_{12}$ but the number of non zero entries. 
$$M_{12}\in \left\{\left(\begin{array}{ccc}
1 & 1& 1 \\
0 & 0& 0 \\
\end{array}\right),\  \left(\begin{array}{ccc}
1 & 1& 0 \\
1 & 0& 0 \\
\end{array}\right) \right\} \subseteq \mathcal F^{(2, 3)}.$$
$$M_{21}\in \left\{\left(\begin{array}{ccc}
1 & 0 \\
1 & 0 \\
1 & 0 \\
\end{array}\right),\   \left(\begin{array}{ccc}
1 & 1 \\
1 & 0 \\
0 & 0 \\
\end{array}\right) \right\} \subseteq \mathcal F^{(3, 2)}.$$

Now we check if all these choices are allowed. We have to look at the conditions on $M_{22}$.
We have $M_{22}\le M_{1,2}^{\langle(1,2),\bullet\rangle}$ and  $M_{22}\le M_{2,1}^{\langle\bullet, (1,2)\rangle}$. In the next table we collect all the possibilities for $M_{22}$ depending on $M_{1,2}$ and $M_{2,1}.$ 
\[
\begin{array}{c||c|c}
M_{22}\le & \left(\begin{array}{ccc}
	1 & 1& 1 \\
	0 & 0& 0 \\
	\end{array}\right)& \left(\begin{array}{ccc}
	1 & 1& 0 \\
	1 & 0& 0 \\
	\end{array}\right)\\[0,3cm]
\hline
\hline
 \left(\begin{array}{ccc}
1 & 0 \\
1 & 0 \\
1 & 0 \\
\end{array}\right) & 
\left(\begin{array}{ccc}
1 & 0 & 0 \\
0 & 0 & 0 \\
0 & 0 & 0 \\
\end{array}\right)
&
\left(\begin{array}{ccc}
1 & 0 & 0 \\
1 & 0 & 0 \\
1 & 0 & 0 \\
\end{array}\right)
\\[0,5cm]
\hline
\left(\begin{array}{ccc}
1 & 1 \\
1 & 0 \\
0 & 0 \\
\end{array}\right)	& 
\left(\begin{array}{ccc}
1 & 1 & 1 \\
0 & 0 & 0 \\
0 & 0 & 0 \\
\end{array}\right)
&
\left(\begin{array}{ccc}
1 & 1 & 0 \\
1 & 0 & 0 \\
0 & 0 & 0 \\
\end{array}\right)
\\
\end{array}
\]
Since $H(2,2)=3$, we note that \begin{itemize}
	\item[--] only one of the above four matrices in the table must be excluded;
	\item[--] in the other three cases $M_{22}$ is uniquely determined by $M_{12}$ and $M_{21}$.
\end{itemize}
  Therefore, $H$ is a fractal function and three different set of matrices satisfy the conditions in Definition \ref{d.F}. 	
\end{example}

In the following we denote by $X:=\{x_1, \cdots x_n\}$ and $Y:=\{y_1,\cdots, y_m\}$ the set of the variables of degree $(1,0)$ and $(0,1)$ respectively. 

Next lemma is useful for our purpose. 	It is an immediate consequence of Lemma \ref{l.growth}.
\begin{lemma}\label{l.x_1 cdot X^d}
	$x_1\cdot X^{(d)}_a=X^{(d+1)}_{a^{\langle d \rangle}}.$  
\end{lemma}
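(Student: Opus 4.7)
The plan is a direct transcription of Lemma \ref{l.growth} into monomials through the defining correspondence $a\mapsto X^{(d)}_a$. Setting $[a]^{(d)}=(c_d,c_{d-1},\ldots,c_2,c_1)$, the definition introduced just before the lemma gives
$$X^{(d)}_a=x_{c_1}x_{c_2}\cdots x_{c_d}.$$
Applying Lemma \ref{l.growth} then yields the $(d+1)$-fractal coefficients of the Macaulay successor explicitly:
$$[a^{\langle d\rangle}]^{(d+1)}=(c_d,c_{d-1},\ldots,c_2,c_1,c_1).$$
Reading this $(d+1)$-tuple back into a monomial via the same recipe produces a product of $d+1$ variables whose multiset of indices is $\{c_1,c_1,c_2,\ldots,c_d\}$, i.e.\ exactly the indices of $X^{(d)}_a$ together with one extra factor $x_{c_1}$. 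Hence
$$X^{(d+1)}_{a^{\langle d\rangle}}=x_{c_1}\cdot X^{(d)}_a.$$

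To match the stated identity $x_1\cdot X^{(d)}_a=X^{(d+1)}_{a^{\langle d\rangle}}$ it remains to observe $x_{c_1}=x_1$, equivalently $c_1=1$. This is the condition that the rightmost fractal coefficient of $a$ equals $1$, or equivalently that $x_1$ already divides $X^{(d)}_a$. Inspecting the fractal decomposition $a=\binom{k_d}{d}+\cdots+\binom{k_1}{1}$ of Lemma \ref{l.fractal dec}, this is the case precisely when $k_1=1$; in general $c_1=k_1$ can exceed $1$ (e.g. $a=3,d=2$ gives $[3]^{(2)}=(2,2)$, so $c_1=2$), and outside this regime the formula of Lemma \ref{l.x_1 cdot X^d} requires $x_1$ to be read as the duplicated factor $x_{c_1}$ supplied by Lemma \ref{l.growth}.

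The main obstacle is therefore not in the present lemma itself but in Lemma \ref{l.growth}, whose two-case analysis ($c_1<c_2$ versus $c_1=c_2$) reconciles the Macaulay and fractal decompositions; once that lemma is in hand, the argument here is a one-step reading-off of a monomial from its fractal tuple, using only that the decoding $[b]^{(d+1)}\mapsto X^{(d+1)}_b$ preserves the multiset of fractal coefficients as the multiset of variable indices.
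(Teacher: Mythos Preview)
Your unpacking is exactly the route the paper intends: its entire proof is the sentence ``It is an immediate consequence of Lemma~\ref{l.growth}'', and you have written out what that consequence actually is.

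More to the point, you have correctly spotted that the identity as printed is not literally true. Lemma~\ref{l.growth} gives $[a^{\langle d\rangle}]^{(d+1)}=(c_d,\ldots,c_1,c_1)$, and decoding this tuple yields
\[
X^{(d+1)}_{a^{\langle d\rangle}} \;=\; x_{c_1}\cdot X^{(d)}_a,
\]
not $x_1\cdot X^{(d)}_a$. Your example $d=2$, $a=3$ is a genuine counterexample: $[3]^{(2)}=(2,2)$, so $X^{(2)}_3=x_2^2$, while $3^{\langle 2\rangle}=4$ and $X^{(3)}_4=x_2^3$; thus $x_1\cdot X^{(2)}_3=x_1x_2^2=X^{(3)}_3\neq X^{(3)}_4$. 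So the ``$x_1$'' in the statement should be read as $x_{c_1}$, the smallest variable already appearing in $X^{(d)}_a$. With that correction the lemma is exactly the monomial translation of Lemma~\ref{l.growth}, and your derivation is complete; nothing further is needed. It is worth noting for the application in Theorem~\ref{main} that the corrected form still says $X^{(d+1)}_{a^{\langle d\rangle}}$ is the \emph{smallest} degree-$(d{+}1)$ multiple of $X^{(d)}_a$, which is the inequality actually used there.
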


To shorten the notation we set $\alpha_i:={n+i-1 \choose n-1 }$ and $\beta_j:={m+j-1 \choose m-1 }$.
In order to relate fractal functions and Hilbert functions of bigraded algebras we need to introduce a correspondence between Ferrers matrices and monomials.

Let $M=(m_{ab})\in \mathcal F^{\alpha_i\times\beta_j}.$ We denote by $\lambda(M)_{(ij)}$ the set of the monomials 
$$\lambda(M):=\{ X^{(i)}_a\cdot Y^{(j)}_b\ |\ m_{ab}=0 \}.$$   

Let $L\subseteq R_{(i,j)}$ be a bilex set of monomials of bidegree $(i,j)$. We denote by $\mu(L)\in \mathcal M(\{0,1\})^{\alpha_i\times \beta_j}$ the matrix $(m_{ab})$ such that $m_{ab}=1$ iff $X^{(i)}_a\cdot Y^{(j)}_b\notin L$ otherwise $m_{ab}=0$.

\begin{proposition}\label{p.lambda is a Lex, mu is a Ferrers}There is a one to one correspondence between bilex sets of monomials of degree $(i,j)$ and elements in $\mathcal F^{{\alpha_i}\times{\beta_j}}.$
\end{proposition}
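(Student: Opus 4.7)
The plan is to show that $\mu$ and $\lambda$ are mutually inverse maps between the set of bilex subsets of $R_{(i,j)}$ and $\mathcal F^{\alpha_i \times \beta_j}$. Since the two maps are defined by flipping the indicator (a monomial lies in the bilex set iff the corresponding matrix entry is $0$), the fact that they are set-theoretic inverses is purely formal once we know the ranges are correct. So the substance lies in checking that $\mu(L)$ lies in $\mathcal F^{\alpha_i\times\beta_j}$ whenever $L$ is bilex, and that $\lambda(M)$ is bilex whenever $M$ is a Ferrers matrix.

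First I would fix the ordering convention: by Remark \ref{r.i-th monomial in lex order}, under the degree-lex order induced by $x_1<\cdots<x_n$ (equivalently $x_n>\cdots>x_1$, which is the order used for the bilex definition) we have $X_a^{(i)}>_{lex} X_b^{(i)}$ iff $a>b$, and analogously $Y_a^{(j)}>_{lex} Y_b^{(j)}$ iff $a>b$. So in the pair of indices $(a,b)$ labelling rows and columns, increasing either coordinate corresponds to passing to a larger monomial in that block of variables.

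Next I would verify that $\mu(L)$ is a Ferrers matrix. Suppose $\mu(L)_{ab}=1$, i.e.\ $X_a^{(i)}Y_b^{(j)}\notin L$, and take $(a',b')\le(a,b)$. If $X_{a'}^{(i)}Y_{b'}^{(j)}$ belonged to $L$, then applying Definition \ref{Def4.4} to the factor $Y_{b'}^{(j)}$ with $X_a^{(i)}\ge X_{a'}^{(i)}$ would force $X_a^{(i)}Y_{b'}^{(j)}\in L$, and then applying it again to $X_a^{(i)}$ with $Y_b^{(j)}\ge Y_{b'}^{(j)}$ would force $X_a^{(i)}Y_b^{(j)}\in L$, a contradiction. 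Hence $\mu(L)_{a'b'}=1$, which is exactly the Ferrers property.

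Conversely, for $M\in\mathcal F^{\alpha_i\times\beta_j}$ I would show $\lambda(M)$ is bilex. If $X_a^{(i)}Y_b^{(j)}\in\lambda(M)$, so $m_{ab}=0$, and $u'=X_{a'}^{(i)}>X_a^{(i)}$, then $a'>a$, so $(a,b)\le(a',b)$; if $m_{a'b}$ were $1$, the Ferrers property would give $m_{ab}=1$, contradicting $m_{ab}=0$. Therefore $m_{a'b}=0$ and $u'Y_b^{(j)}\in\lambda(M)$. The symmetric argument works for $v'>Y_b^{(j)}$. Finally, the compositions $\lambda\circ\mu$ and $\mu\circ\lambda$ are the identities by direct inspection of the definitions: both amount to complementing twice the set $\{X_a^{(i)}Y_b^{(j)}\}$ inside $R_{(i,j)}$. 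No step presents a real obstacle; the only delicate point is aligning the two (equivalent) orderings $x_1<\cdots<x_n$ and $x_n>\cdots>x_1$ so that increasing matrix indices correspond to larger monomials in the bilex sense.
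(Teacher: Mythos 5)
Your proof is correct and follows essentially the same route as the paper: both verify that $\mu$ sends bilex sets to Ferrers matrices and $\lambda$ sends Ferrers matrices to bilex sets, using the order-compatibility of the fractal indexing (Lemma \ref{l.lexord} and Remark \ref{r.i-th monomial in lex order}). Your explicit observation that $\lambda$ and $\mu$ are mutually inverse by double complementation is a small completeness improvement over the paper, which leaves that step implicit, but it does not change the substance of the argument.
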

\begin{proof}
 Let $M=(m_{ab})\in \mathcal F^{\alpha_i\times\beta_j}$, we claim that $$\lambda(M)\ \text{ is a bilex set of monomials of bidegree}\ (i,j).$$	
 To prove the claim we use Lemma \ref{l.lexord} and Remark \ref{r.i-th monomial in lex order}. 
 Let $X_a^{(i)}\cdot Y_{b}^{(j)}$ be an element in $\lambda(M)$ and $X_u^{(i)}> X_a^{(i)}.$ Since $(u,b)>(a,b)$ and  $m_{ab}=0$ we get $m_{ub}=0$ i.e.   $X_u^{(i)}\cdot Y_{b}^{(j)}\in \lambda(M).$ 
 In a similar way it follows that $X_a^{(i)}\cdot Y_{v}^{(j)}\in \lambda(M)$ for $v>b.$ 	
	
 Let $L\subseteq R_{(i,j)}$ be a bilex set of monomials of bidegree $(i,j)$. We claim that $$\mu(L)\in \mathcal F^{\alpha_i\times \beta_j}.$$
 The claim follows by using Lemma \ref{l.lexord} and Remark \ref{r.i-th monomial in lex order}. Indeed, say $m_{ab}=0$ for an entry of $\mu(L)$. This implies   $X^{(i)}_a\cdot Y^{(j)}_b\in L$. Thus, for $u>a$, we have $X^{(i)}_u\cdot Y^{(j)}_b\in L$ i.e. $m_{ub}=0$. Analogously, we see that $m_{av}=0$ for $v>b.$

\end{proof}

We are ready to prove the main result of this paper. 
\begin{theorem}\label{main}
	Let $H:\N\times\N\to \N$ be a numerical function. Then the following are equivalent
	\begin{enumerate}
		\item[1)] $H$ is a fractal function;
		\item[2)] $H=H_{R/I}$ for some bilex ideal $I\subseteq R= k[x_1,\ldots ,x_n,y_1,\ldots, y_m ].$
	\end{enumerate}  
\end{theorem}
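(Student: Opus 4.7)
My plan is to exploit Proposition \ref{p.lambda is a Lex, mu is a Ferrers}, which bijects bilex sets of monomials at bidegree $(i,j)$ with Ferrers matrices in $\mathcal F^{\alpha_i\times\beta_j}$, and to turn the fractal growth of Definition \ref{d.F} into the ideal axiom for a bilex ideal. The key combinatorial step is to interpret the row expansion concretely: for $X^{(i)}_a=x_{c_1}\cdots x_{c_i}$ with $c_1\le\cdots\le c_i$, let $r(a)$ be the unique index with $X^{(i-1)}_{r(a)}=x_{c_2}\cdots x_{c_i}$, i.e.\ the index obtained by deleting the smallest variable. For each $r'$ the monomials with $r(a)=r'$ are $x_s\cdot X^{(i-1)}_{r'}$ for $s=1,\dots,d_1$, where $d_1$ is the smallest variable of $X^{(i-1)}_{r'}$; by Theorem \ref{t.k_d} this count equals $[n]^{i-1}_{r'}$, and the lex order shows these fibres form contiguous blocks in $a$. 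Consequently the $a$-th row of $M_{i-1,j}^{\langle [n]^{i-1},\bullet\rangle}$ is the $r(a)$-th row of $M_{i-1,j}$, so the growth condition becomes the pointwise inequality $M_{ij}(a,b)\le M_{i-1,j}(r(a),b)$ (with a symmetric statement on the $Y$-side using $[m]^{j-1}$).

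For $(2)\Rightarrow(1)$, given a bilex ideal $I$, let $L_{ij}$ be the bilex set of monomials spanning $I_{(i,j)}$ and set $M_{ij}:=\mu(L_{ij})$. These are Ferrers matrices with $\sum M_{ij}=\alpha_i\beta_j-|L_{ij}|=H_{R/I}(i,j)$. The growth is immediate from $X^{(i)}_a Y^{(j)}_b = x_{c_1}\cdot X^{(i-1)}_{r(a)}Y^{(j)}_b$: if the left-hand side is not in $I$ then the right-hand factor is not in $I$ either, so $M_{i-1,j}(r(a),b)=1$. The $Y$-direction is symmetric.

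For $(1)\Rightarrow(2)$, set $L_{ij}:=\lambda(M_{ij})$ and $I:=\bigoplus_{i,j}\langle L_{ij}\rangle_k$. By Proposition \ref{p.lambda is a Lex, mu is a Ferrers} each $L_{ij}$ is bilex and $\dim_k(R/I)_{(i,j)}=\sum M_{ij}=H(i,j)$, so the only point to check is that $I$ is an ideal, i.e.\ that $x_s L_{ij}\subseteq L_{i+1,j}$ and $y_t L_{ij}\subseteq L_{i,j+1}$ for every variable. Take $X^{(i)}_r Y^{(j)}_b\in L_{ij}$ (so $M_{ij}(r,b)=0$) and let $x_s X^{(i)}_r = X^{(i+1)}_{a'}$. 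If $x_s$ is the smallest variable of $X^{(i+1)}_{a'}$ then $r(a')=r$ and the fractal inequality at once gives $M_{i+1,j}(a',b)=0$. Otherwise $x_s$ is strictly larger than the smallest variable of $X^{(i+1)}_{a'}$, which forces $r(a')>r$; the Ferrers property of $M_{ij}$ then propagates the zero at $(r,b)$ to $(r(a'),b)$, and the fractal inequality again yields $M_{i+1,j}(a',b)=0$. The $y_t$-case is symmetric, using the column expansion by $[m]^{j-1}$. The main obstacle I expect is the combinatorial dictionary established in the first paragraph: once $r(a)$ is identified with removing the smallest variable, both implications reduce to the observation that the fractal growth captures multiplication by the smallest variable while the Ferrers/bilex structure supplies closure under multiplication by the others.
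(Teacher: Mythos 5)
Your proof is correct, and it follows the paper's overall architecture --- both directions pass through the correspondence $\lambda/\mu$ of Proposition \ref{p.lambda is a Lex, mu is a Ferrers} and reduce to closure of the monomial spans under multiplication by each variable --- but your key combinatorial step is different and, as it happens, more robust than the paper's. The paper locates the relevant zero entry via Lemma \ref{l.x_1 cdot X^d} together with Theorem \ref{t.bound on growth}: from $M_{ij}(a,b)=0$ it deduces $M_{i+1,j}(a^{\langle i\rangle},b)=0$ and lets the Ferrers/lex structure propagate. You instead build the explicit dictionary $r(a)$ (delete the smallest variable), identify the fibres of $r$ as contiguous blocks of lengths $[n]^{i-1}_{r'}$, and convert $M_{ij}\le M_{i-1,j}^{\langle [n]^{i-1},\bullet\rangle}$ into the pointwise inequality $M_{ij}(a,b)\le M_{i-1,j}(r(a),b)$. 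This buys two things. First, it silently corrects Lemma \ref{l.x_1 cdot X^d}: what Lemma \ref{l.growth} actually yields is $x_{c_1}\cdot X^{(d)}_a=X^{(d+1)}_{a^{\langle d\rangle}}$ with $x_{c_1}$ the \emph{smallest variable dividing} $X^{(d)}_a$, not $x_1$ (for instance $x_1\cdot X^{(1)}_2=x_1x_2=X^{(2)}_2$, while $2^{\langle 1\rangle}=3$ and $X^{(2)}_3=x_2^2$); consequently the paper's chain $x_u\cdot X_a^{(i)}\cdot Y_b^{(j)}\ge x_1\cdot X_a^{(i)}\cdot Y_b^{(j)}=X^{(i+1)}_{a^{\langle i\rangle}}\cdot Y_b^{(j)}$ fails whenever $u$ is smaller than the minimal variable of $X^{(i)}_a$ (the index of $x_1X^{(i)}_a$ is the \emph{first} position of its block, not the last), whereas your case split on whether $x_s$ becomes the minimal variable of the product handles exactly this point. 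Second, in $(2)\Rightarrow(1)$ the paper's reduction ``it is enough to show $M_{i+1,j}(a^{\langle i\rangle},b)=0$'' only controls the last position of each block, and Ferrers zeros propagate to larger, not smaller, row indices, so the earlier positions of the block require the implicit fact that the fixed-column ideal $J$ is closed under multiplication by \emph{all} variables; your pointwise inequality is genuinely equivalent to the expansion condition of Definition \ref{d.F}, and your verification covers every position of the block at once. In short: same route, but your fibre-block dictionary makes rigorous the two steps the paper leaves loose.
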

\begin{proof}
		$(1)\Rightarrow (2)$ Let $H$ be a fractal function. 
		For each $(i,j)\in \N^2$, let $I_{(i,j)}$ be the $k$-vector space spanned by the elements in $\lambda(M_{ij})$. We claim that $I:=\oplus_{(i,j)\in\N^2} I_{(i,j)}$ is an ideal of $R.$  To prove the claim, it is enough to show that if $X_a^{(i)}\cdot Y_b^{(j)}\in I_{(i,j)}$ then $x_u\cdot X_a^{(i)}\cdot Y_b^{(j)}\in I_{(i+1,j)}$ for any $x_u\in X$ and $y_v\cdot X_a^{(i)}\cdot Y_b^{(j)}\in I_{(i,j+1)}$ for any $y_v\in Y.$ We have, see Lemma \ref{l.x_1 cdot X^d},
		$$x_u\cdot X_a^{(i)}\cdot Y_b^{(j)} \ge x_1\cdot X_a^{(i)}\cdot Y_b^{(j)} =X_{a^{\langle i \rangle}}^{(i+1)}\cdot Y_b^{(j)}.$$
		Then, by Definition \ref{d.F} and Theorem \ref{t.bound on growth}, the entry $(a^{\langle i \rangle},b)$ in the matrix $M_{i+1,j}$ is $0$. Thus $X_{a^{\langle i \rangle}}^{(i+1)}\cdot Y_b^{(j)}\in I_{(i+1,j)}$ and furthermore $x_u\cdot X_a^{(i)}\cdot Y_b^{(j)}\in I_{(i+1,j)}.$
	In a similar way it follows that $y_v\cdot X_a^{(i)}\cdot Y_b^{(j)}\in I_{(i,j+1)}$.
	
		$(2)\Rightarrow (1)$   Let $I\subseteq R$ be a bilex ideal such that $H_{R/I}=H.$ Set $M_{ij}:=\mu(I_{ij})$, we claim that the $M_{ij}$s satisfy the condition in Definition \ref{d.F}. By Theorem \ref{t.bound on growth} it is enough to show that if $M_{ij}(a,b)=0$ (the entry $(a,b)$ in $M_{ij}$ is $0$), then also $M_{i+1,j}(a^{\langle i \rangle },b)=0$ (the entry $(a^{\langle i \rangle },b)$ in $M_{i+1j}$ is $0$). Set $J:=(X_u^{(h)}\ |\ M_{hj}(u,b)=0)$, then the claim is an immediate consequence of the fact that $J$ is a lex ideal of $k[X]$. 
\end{proof}

\begin{remark}
	Note that since the $M_{ij}$ are not uniquely determined, as shown in Example \ref{e.is fractal?}, then there could be several bilex ideals having the same Hilbert function.  
\end{remark}

The following question is motivated by the arguments in Section \ref{s.Fractal and Homol}.
\begin{question}
Can the bigraded Betti numbers of a bilex ideal $I=\oplus I_{(i,j)}$ be computed from the matrices $\mu(I_{(i,j)})$?
\end{question}

\end{document}